
\documentclass{amsart}[12pt]
\usepackage{graphicx}
\usepackage{amsmath}
\usepackage{amsthm}
\usepackage{amsfonts}
\usepackage{amssymb, amscd}

\usepackage{color}

\usepackage[all]{xy}

\setlength{\textwidth}{160mm} \setlength{\textheight}{220mm}
\setlength{\parindent}{8mm} \setlength{\headheight}{0pt}
\setlength{\headsep}{30pt} \setlength{\topmargin}{1mm}
\setlength{\oddsidemargin}{-.10pt}
\setlength{\evensidemargin}{-1.0pt} \setlength{\parindent}{0pt}

\vfuzz2pt 
\hfuzz2pt 
\newtheorem{thm}{Theorem}[section]
\newtheorem{cor}[thm]{Corollary}
\newtheorem{lem}[thm]{Lemma}
\newtheorem{prop}[thm]{Proposition}
\newtheorem{example}[thm]{Example}
\theoremstyle{definition}
\newtheorem{defn}[thm]{Definition}
\theoremstyle{remark}
\newtheorem{rem}[thm]{Remark}
\numberwithin{equation}{section}
\numberwithin{equation}{section}

\newcommand{\K}{\mathbb K}

\newcommand{\A}{\mathcal{A}}

\newcommand{\sll}{\mathfrak{sl}_2(\mathbb{K})}


\input{epsf.sty}

\def\a{{\mathfrak a}}

\newcommand{\C}{\mathbb{C}}
\newcommand{\Z}{\mathbb{Z}}

\newcommand{\LX} {\mathcal{L}}




\begin{document}

\title[Cohomology and Deformations of Hom-algebras]{ Cohomology and Deformations of Hom-algebras}

\author{F. Ammar, Z. Ejbehi and A. Makhlouf }%

\address{Faouzi Ammar and Zeyneb Ejbehi, Universit{\'{e}} de Sfax, Facult{\'{e}} des Sciences, B.P.
1171, 3000 Sfax, Tunisia} \email{Faouzi.Ammar@rnn.fss.tn, ejbehizeyneb@yahoo.fr}
\address{Abdenacer Makhlouf  Universit\'{e} de Haute Alsace,  Laboratoire de Math\'{e}matiques, Informatique et Applications,
4, rue des Fr\`{e}res Lumi\`{e}re F-68093 Mulhouse, France}%
\email{Abdenacer.Makhlouf@uha.fr}

\maketitle


\begin{abstract}
The purpose of this paper is to define  cohomology structures on Hom-associative algebras and Hom-Lie algebras. The first and second coboundary maps were  introduced by Makhlouf and Silvestrov in the study of one-parameter formal  deformations theory. Among  the relevant formulas for a generalization of Hochschild cohomology for Hom-associative algebras and  a Chevalley-Eilenberg
cohomology for  Hom-Lie algebras, we define  Gerstenhaber bracket on the space of multilinear mappings of Hom-associative algebras and  Nijenhuis-Richardson bracket on the space
of multilinear mappings of Hom-Lie algebras. Also we enhance the deformations theory of this  Hom-algebras by studying the obstructions.
\end{abstract}

 \section*{Introduction}
Hom-Type algebras have been recently investigated by many authors. The main feature of these
algebras is that the identities defining the structures are twisted
by homomorphisms.  Such algebras appeared in the ninetieth in
examples of $q$-deformations of the Witt and the Virasoro algebras. Motivated by these
examples and their generalization,   Hartwig, Larsson and Silvestrov
introduced and studied  in \cite{{Harwig Silv}} the classes of
quasi-Lie, quasi-Hom-Lie and Hom-Lie algebras. In the class of
Hom-Lie algebras skew-symmetry is untwisted, whereas the Jacobi
identity is twisted by a homomorphism and contains three terms as in
Lie algebras, reducing to ordinary Lie algebras when the twisting
linear map is the identity map.

The  Hom-associative algebras play the role of associative algebras
in the Hom-Lie setting. They were introduced by Makhlouf and
Silvestrov  in \cite{Makhl Silv Hom}, where it is shown that
 the commutator bracket of a Hom-associative algebra
gives rise to a Hom-Lie algebra. Given a Hom-Lie algebra,  a universal enveloping Hom-associative
 algebra was constructed by Yau in \cite{Yau:EnvLieAlg}. The Hom-Lie superalgebras have been studied by Ammar and Makhlouf in \cite{F.Ammar}. In a similar way  several other algebraic structures have been investigated.

  The one-parameter formal deformations of Hom-associative algebras and Hom-Lie algebras were studied  by Makhlouf and Silvestrov in \cite{Makhl Silv Not}. The authors introduced  the first
  and second cohomology spaces of Hom-associative algebras and Hom-Lie algebras, which fits with the deformation theory.

  The purpose of this paper is to enhance the cohomology study initiated in \cite{Makhl Silv Not}. We consider multiplicative Hom-associative algebras and Hom-Lie algebras. Among other the following main results   are obtained:

 (1) We define a Gerstenhaber bracket on the space of multilinear mappings of Hom-associative algebras and the Richardson-Nijenhuis-bracket on the space
of multilinear mappings of Hom-Lie algebras.

(2) We provide a Hochschild cohomology of Hom-associative algebras and
a   Chevalley-Eilenberg cohomology of  Hom-Lie algebras, extending in one hand these cohomologies to Hom-algebras situation and in the other hand generalizing the first and second coboundary maps introduced in \cite{Makhl Silv Not}.

 The  paper is organized as  follows.
  In the first Section  we summarize the definitions of Hom-algebras of different  type (see \cite{F.Ammar},\cite{Harwig Silv},\cite{Makhl Silv Not},\cite{Makhl Silv Hom}) and  present some  preliminary results on graded algebras  (see \cite{Gerst def}, \cite{Lecomte}). In  Section 2 we define a cohomology structure of Hom-associative algebras
 and a cohomology structure of  Hom-Lie algebras. Section 3 is dedicated to study  $C_{\alpha}(\A,\A)$, the set of multilinear mappings $\varphi$ satisfying
 $\alpha (\varphi(x_0,...,x_{n-1}))=\varphi(\alpha(x_0),...,\alpha(x_{n-1}))
 \; \forall\;x_0,...,x_{n-1} \in \A$, where  $\alpha : \A\rightarrow \A$ is a morphism. It is endowed with a structure of   graded Lie algebra $(C_{\alpha}(\A,\A),[.,.]_\alpha^{\Delta})$, where  $[.,.]_\alpha^{\Delta}$ is
   the Gerstenhaber bracket. Henceforth, we provide  a cohomology differential operator
  $D^{\alpha}_{\mu}=[\mu,.]^{\Delta}_{\alpha}$
 on $C_\alpha (\A,\A)$ where $(\A,\mu,\alpha)$
 is a Hom-associative algebra such that $\alpha(\mu(x,y))=\mu(\alpha(x),\alpha(y)).$ We denote by $H^{*}_{D} (\A,\A)$ the corresponding  cohomology spaces
  and we show that $H^{*}_{D} (\A,\A)=H^{*+1}_{Hom} (\A,\A)$ where $H^{*+1}_{Hom} (\A,\A)$ is the space of Hochschild cohomology of  Hom-associative algebras.
Also we study the graded algebra $(\tilde{C}_{\alpha}(\LX,\LX),[.,.]_\alpha^{\wedge})$
    of multilinear $\varphi$ mapping satisfying
  $\alpha (\varphi(x_0,...,x_{n-1}))=\varphi(\alpha(x_0),...,\alpha(x_{n-1}))$
   for all $x_0,...,x_{n-1} \in \LX$ where $(\LX,[.,.],\alpha)$ is Hom-Lie algebra such that
  $\alpha([x,y])=[\alpha(x),\alpha(y)]$ and $[.,.]_\alpha^{\wedge}$ is the
  Nijenhuis-Richardson bracket. Similarly, we provide a cohomology differential operator of
  $D^{\alpha}_{[.,.]}=[[.,.],.]^{\Delta}_{\alpha}$. We denote $H^{*+1}_{HL}
  (\LX,\LX)$ the corresponding space of cohomology and we show that $H^{*}_{D} (\LX,\LX)=H^{*+1}_{HL} (\LX,\LX)$ where
   $H^{*+1}_{HL} (\LX,\LX)$ is the space of  Chevalley-Eilenberg cohomology of the Hom-Lie algebra.
In the last Section, we recall and enhance  the one-parameter formal deformation  theory of Hom-associative algebras and Hom-Lie algebras introduced  in \cite{Makhl Silv Not}, we study in particular the obstructions involving the third cohomology groups.

 Throughout this paper $\K$ denotes an algebraically closed field of characteristic 0.

\section { Preliminaries}
In this Section  we summarize the definitions of Hom-type algebras and provide some examples   (see \cite{F.Ammar},\cite{Harwig Silv} , \cite{Makhl Silv Not},\cite{Makhl Silv Hom}) and  present some  preliminary results on graded algebras  (see \cite{Gerst def}, \cite{Lecomte}).
\subsection{Hom-algebras}
We mean by Hom-algebra a triple $(A,\mu,\alpha)$ consisting of a $\K$-vector space $A$, a bilinear map $\mu : A \times A \longrightarrow A $   and a linear   map $ \alpha :A \rightarrow A$. The main feature of  Hom-algebra structures is that the classical identities are   twisted by the linear map. We summarize in the following the definitions of Hom-associative algebra, Hom-Lie algebras and Hom-Poisson algebras.
\begin{defn}
A Hom-associative algebra is a triple $(\A,\mu,\alpha)$ consisting of a $\K$-vector space $\A$, a bilinear map $\mu : \A \times \A \rightarrow \A $  and  a linear   map $ \alpha :\A \rightarrow \A$  satisfying $$\mu(\alpha(x),\mu(y,z))=\mu(\mu(x,y),\alpha(z))\; \;\hbox{for all}\; x,y,z \in \A\;\qquad \;\hbox{(Hom-associativity)}$$
We refer by $\A$ to the Hom-associative algebra when there is no ambiguity.
\end{defn}
\begin{rem}
When $\alpha$ is the identity map, we recover the classical
associative algebra.
\end{rem}
\begin{example}\label{ex}
Let $\A$ be  a $2$-dimensional  vector space over $\K$, generated by $\{e_1,e_2\}$,
 $\mu : \A
 \times \A \rightarrow \A $ be a multiplication  defined by
\begin{itemize}
\item $\mu(e_1,e_1)=e_1$
 \item $\mu(e_i,e_j)=e_2$  if $(i,j)\neq(1,1)$
 \end{itemize}
 and  $\alpha : V\rightarrow V$ be a linear map defined by $\alpha(e_1)=\lambda e_1+\gamma e_2,\;  \alpha(e_2)=(\lambda+\gamma) e_2 \;\hbox{where}\; \lambda,\gamma \in \K^* .$

 Then $(V,\mu,\alpha)$ is a Hom-associative algebra.
\end{example}

\begin{defn}
A Hom-Lie algebra   is a triple $(\LX,[.,.],\alpha)$ consisting by  a $\K$-vector space $\LX$, a bilinear map $[.,.] : \LX\times \LX \rightarrow \LX $  and  a linear  map $ \alpha : \LX \rightarrow \LX$  satisfying $$ [x,y]=-[y,x]\;\hbox{for all}\; x,y \in \LX\;\qquad \;\hbox{(skew-symmetry)}, $$
$$\hbox{and}\; \circlearrowleft_{x,y,z}\big[\alpha(x),[y,z]\big]=0\;\;\hbox{for all}\; \;x,y,z \in \LX\;\qquad \;\hbox{(Hom-Jacobi identity)}$$
where $ \circlearrowleft_{x,y,z}$ denotes summation over the cyclic permutation on $x, y, z$.

We refer by $\LX$ to the Hom-Lie algebra when there is no ambiguity.
\end{defn}
\begin{rem}
We recover the classical Lie algebra when $\alpha = id$.
\end{rem}
\begin{example}[\cite{Makhl Silv Hom}]
$(\sll (2,\C),[\cdot,\cdot],\alpha)$ is a $3$-dimensional  Hom-Lie algebra generated by  $$ H=\left(
\begin{array}{cc}
1 & 0 \\
0 & -1 \\
\end{array}
\right),
\\E=\left(
  \begin{array}{cc}
  0 & 0 \\
    1 & 0 \\
     \end{array}
      \right)
      ,F=\left(
      \begin{array}{cc}
       0 & 1 \\
          0 & 0 \\
           \end{array}
           \right),
$$
with $[A,B]=AB-BA$ and where the twist maps are given with respect to the basis by the matrices
$$
\mathcal{M}_\alpha=\left(
\begin{array}{ccc}
a & c & d \\
2d & b & e \\
2c & f & b \\
\end{array}
\right)\; \hbox{where}\; a,b,c,d,e,f \in \C,$$
\end{example}
Let $\left(\A,\mu,\alpha \right) $ and
$\left( \A^{\prime },\mu ^{\prime
},\alpha^{\prime }\right) $ (resp. $\left(\LX,[.,.],\alpha \right) $ and
$\left( \LX^{\prime },[.,.]^{\prime
},\alpha^{\prime }\right) $) be two Hom-associative  (resp. Hom-Lie) algebras. A linear map
$\phi\ :\A\rightarrow \A^{\prime }$ (resp. $\phi\ :\LX\rightarrow \LX^{\prime }$) is
a morphism of Hom-associative (resp. Hom-Lie)  algebras if%
$$
\mu ^{\prime }\circ (\phi\otimes \phi)=\phi\circ \mu \quad (\text{resp.  } [.,.]^{\prime }\circ (\phi\otimes \phi)=\phi\circ [.,.]) \qquad \text{ and }\qquad \phi\circ \alpha=\alpha^{\prime }\circ \phi.
$$

Now, we define Hom-Poisson algebras introduced in \cite{Makhl Silv Not}. This structure
 emerged naturally in   deformation theory. It is shown that a one-parameter formal deformation of commutative Hom-associative algebra leads to a Hom-Poisson algebra.

\begin{defn}
A Hom-Poisson algebra is a quadruple $(A,\mu, \{\cdot,\cdot\}, \alpha)$ consisting of
 a vector space $A$, bilinear maps $\mu: A\times A \rightarrow A$ and
  $\{\cdot, \cdot\}: A\times A \rightarrow A$, and
 a linear map $\alpha: A \rightarrow A$
 satisfying
 \begin{enumerate}
\item $(A,\mu, \alpha)$ is a commutative Hom-associative algebra,
\item $(A, \{\cdot,\cdot\}, \alpha)$ is a Hom-Lie algebra,
\item
for all $x, y, z$ in $A$,
\begin{equation}\label{CompatibiltyPoisson}
\{\alpha (x) , \mu (y,z)\}=\mu (\alpha (y),
\{x,z\})+ \mu (\alpha (z), \{x,y\}).
\end{equation}
\end{enumerate}
\end{defn}

\begin{example}\label{example1HomPoisson}
Let $\{x_1,x_2,x_3\}$  be a basis of a $3$-dimensional vector space
$A$ over $\K$. The following multiplication $\mu$, skew-symmetric
bracket and linear map $\alpha$ on $A$ define a Hom-Poisson algebra
over $\K^3${\rm :}
$$
\begin{array}{ll}
\begin{array}{lll}
 \mu ( x_1,x_1)&=&  x_1, \ \\
\mu ( x_1,x_2)&=& \mu ( x_2,x_1)=x_3,\\
 \end{array}
 & \quad
 \begin{array}{lll}
\{ x_1,x_2 \}&=& a x_2+ b x_3, \ \\
\{ x_1, x_3 \}&=& c x_2+ d x_3, \ \\
  \end{array}
\end{array}
$$

$$  \alpha (x_1)= \lambda_1 x_2+\lambda_2 x_3 , \quad
 \alpha (x_2) =\lambda_3 x_2+\lambda_4 x_3  , \quad
   \alpha (x_3)=\lambda_5 x_2+\lambda_6 x_3
$$
where $a,b,c,d,\lambda_1,
\lambda_2,\lambda_3,\lambda_4,\lambda_5,\lambda_6 $ are parameters
in $\K$.
\end{example}

\subsection{Graded Lie algebras}
In the following we recall the definition of $\Z$-graded Lie algebra and elements of Gerstenhaber algebra which endow the set of classical cochains, see \cite{Gerst def,Lecomte}.
\begin{defn}
A pair $(A,[.,.])$ is a $\Z$-graded Lie algebra if
\begin{enumerate}
\item $A$  is a graded algebra, i.e. it is a direct summation  of vector subspaces, $A=\bigoplus_{n\in\Z}A^n$, such that $[A^n,A^m]\subset A^{n+m}$,
\item the bracket $[.,.]$ in A is graded skew-symmetric, i.e.
\begin{equation}\label{gradedSkewSym}
[x,y]=-(-1)^{pq} [y,x]\;\;\hbox{for}\; x \in A^p, y\in A^q ,
\end{equation}
\item it satisfies the so called  graded Jacobi identity : \begin{equation}\label{gradedJacobi}\circlearrowleft_{x,y,z}(-1)^{pq}\big[x,[y,z]\big]=0,\;\;\hbox{for}\; x \in A^p, y\in A^r, z\in A^q.
    \end{equation}
\end{enumerate}
\end{defn}
\begin{rem}\label{rem}
It is easy to check that if $\pi\in A^1$ is such that $[\pi,\pi]=0$ then the map $\delta_\pi^p : A^p\rightarrow A^{p+1}$ defined by $ \delta_\pi^p (x)=[\pi,x]$ is a coboundary map, i.e. $\delta_\pi^{p+1}\circ\delta_\pi^p=0.$ Indeed, from \ref{gradedJacobi} we have $$[[\pi,\pi],x]=2 [\pi,[\pi,x]]=2 \delta_\pi^{p+1}\big(\delta_\pi^p(x)\big).$$
\end{rem}

\
Let $A$ be a $\K$-vector space and  $M^k(A,A)$ be the space of $(k+1)$-linear maps $K : A^{\times k}\rightarrow A$ and set $M(A,A)=\bigoplus_{k\in\Z}M^k(A,A)$.
In \cite{Gerst def,Lecomte}, the graded Lie algebra $(M(A,A),[.,.]^\Delta)$ is described for each vector space $A$ with the property  that $(A,\mu)$ is an associative algebra  if and only if $\mu \in M^1(A,A)$ and $[\mu,\mu]^\Delta=0$. This algebra is defined as follows:
\\For $K_i\in M^{k_i}$ and $x_j\in A$ we define $j_{K_1}K_2\in M^{k_1+k_2}(A)$ by $$j_{K_1}K_2(x_0,...,x_{k_1+k_2})=\sum_{i=0}^{k_2} (-1)^{k_{1}i}K_2(x_0,...,K_1(x_i,...,x_{k_{1}+i}),...,x_{k_{1}+k_2}).$$
In particular, if $k_1=k_2=1$ we have $j_{K_1}K_2(x_0,x_1,x_{2})=K_2(K_1(x_0,x_{1}),x_{2})-K_2(x_0,K_1(x_{1},x_{2}))$ which is denoted sometimes by $K_2\circ K_1$.

The graded Lie bracket on $M(A,A)$ is then given by $$[K_1,K_2]^\Delta=j_{K_1}K_2-(-1)^{k_1k_2}j_{K_2}K_1.$$
The graded Jacobi identity is a consequence of the  formula $$j_{[K_1,K_2]^\Delta}=[j_{K_1},j_{K_2}], \quad \text{ where} \quad [.,.]\ \  \text{ is the graded commutator in} \ \  End(M(A,A)).$$
Also in \cite{Gerst def,Lecomte}, the graded Lie algebra $(\lambda(M(A,A)),[.,.]^\wedge)$ is described for each vector space $A$ with the property  that $(A,[.,.])$ is a Lie algebra if and only if $[.,.] \in M^1(A,A)$ and $\big[[.,.],[.,.]\big]^\wedge=0$. This algebra is as follows: \\For the alternator operator $\lambda : M(A,A)\rightarrow M(A,A)$ they defined  $(\lambda(M(A,A))$ as the space of alternating cochains and similarly one  defines
$$i_{K_1}(K_2):= \frac{(k_1+k_2+1)!}{(k_1+1)!(k_2+1)!}\lambda(j_{K_1}(K2)).$$
The graded Lie bracket of $\lambda(M(A,A))$ is then given by
$$[K_1,K_2]^\wedge=\frac{(k_1+k_2+1)!}{(k_1+1)!(k_2+1)!}\lambda([K_1,K_2]^\Delta)=i_{K_1}K_2-(-1)^{k_1k_2}i_{K_2}K_1$$
if $K_1\in M^{k_1}(A,A)$ and  $K_2\in M^{k_2}(A,A)$ then $i_{K_1}K_2 \in \lambda(M^{k_1+k_2}(A,A)).$
 The graded Jacobi identity is a consequence of the following formula
$$\lambda\big(j_{\lambda(K_1)}\lambda(K_2)\big)=\lambda(j_{K_1}K_2).$$

\section {Cohomologies of Hom-associative algebras and Hom-Lie algebras}
 The first and the second cohomology groups of  Hom-associative algebras and Hom-Lie algebras were introduced in \cite{Makhl Silv Not}. The aim  of  this section is to
  construct cochain complexes that define cohomologies of these Hom-algebras with the assumption that they are multiplicative.

\subsection{Cohomology of multiplicative  Hom-associative algebras}
The purpose of  this section is to
  construct cochain complex $C_{Hom}^*(\A,\A)$ of a multiplicative Hom-associative algebra $\A$ with coefficients in $\A$ that defines a cohomology $H_{Hom}^*(\A,\A)$.

Let $(\A,\mu,\alpha)$ be a Hom-associative algebra, for $n\geq 1$ we define  a $\K-$vector space $C_{Hom}^n(\A,\A)$ of $n$-cochains as follows : \\a cochain $\varphi\in C_{Hom}^n(\A,\A)$ is an $n$-linear map $\varphi : \A^n \rightarrow \A \; \hbox{satisfying}$ $$\alpha \circ \varphi(x_0,...,x_{n-1})=\varphi\big(\alpha (x_0),\alpha(x_1),...,\alpha(x_{n-1})\big) \; \hbox{for all}\; x_0,x_1,...,x_{n-1} \in \A.$$
\begin{defn}
We call, for $n\geq1$,  $n$-coboundary operator of the Hom-associative algebra $(\A,\mu,\alpha)$ the linear map $\delta_{Hom}^{n} : C_{Hom}^n(\A,\A) \rightarrow C_{Hom}^{n+1}(\A,\A)$ defined by \begin{align}\label{HomCohomo}\delta_{Hom}^n \varphi (x_0,x_1,...,x_n)& =\mu \big(\alpha^{n-1}(x_0),\varphi(x_1,x_2,...,x_n)\big)\\ \ & +\sum_{k=1}^n (-1)^k \varphi\big(\alpha(x_0),\alpha(x_1),...,\alpha(x_{k-2}),\mu(x_{k-1},x_k),\alpha(x_{k+1}),...,\alpha(x_n)\big)\nonumber \\ \ &+(-1)^{n+1}\mu\big(\varphi(x_0,...,x_{n-1}),\alpha^{n-1}(x_n)\big).\nonumber
\end{align}
\end{defn}
\begin{lem}
Let $D_i : C_{Hom}^n(\A,\A)\rightarrow
C_{Hom}^{n+1}(\A,\A)$ be the linear operators defined for $\varphi\in C_{Hom}^n(\A,\A)$ and $x_0,x_1,...,x_n\in \A$  by
\begin{align*}
& D_0\varphi(x_0,x_1,...,x_n)=-\mu(\alpha^{n-1}(x_0),\varphi(x_1,...,x_n))+ \varphi(\mu(x_0,x_1),\alpha(x_2),...,\alpha(x_n)),\\
& D_i\varphi(x_0,x_1,...,x_n)=\varphi(\alpha(x_0),...,\mu(x_i,x_{i+1}),...,\alpha(x_n)) \quad \hbox{for}\quad  1\leq i\leq n-2,\\
&  D_{n-1}\varphi(x_0,...,x_n)=\varphi(\alpha (x_0),...,,...\alpha(x_{n-2}),\mu(x_{n-1},x_n))
-\mu(\varphi(x_0,...,x_{n-1}),\alpha^{n-1}(x_n)),
\\ &  D_i\varphi=0 \quad  \hbox{for}\quad  i\geq n.
\end{align*} Then
$$D_iD_j=D_jD_{i-1}\quad  0\leq j<i\leq n,\quad  \text{ and }\quad  \delta_{Hom}^n=\sum_{i=0}^n(-1)^{i+1}D_i.$$
\end{lem}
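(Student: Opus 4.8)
The plan is to establish both parts by unwinding the definitions; the only inputs needed are multiplicativity $\alpha\circ\mu=\mu\circ(\alpha\otimes\alpha)$, Hom-associativity of $\mu$, and the cochain condition $\alpha\circ\varphi=\varphi\circ(\alpha\otimes\cdots\otimes\alpha)$.

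\textbf{The decomposition $\delta_{Hom}^n=\sum_{i=0}^n(-1)^{i+1}D_i$.} I would expand the right-hand side on an arbitrary tuple $(x_0,\dots,x_n)$ and match it term by term with \eqref{HomCohomo}. The term $-D_0\varphi$ produces $\mu(\alpha^{n-1}(x_0),\varphi(x_1,\dots,x_n))$ (the first summand of $\delta_{Hom}^n$) together with $-\varphi(\mu(x_0,x_1),\alpha(x_2),\dots,\alpha(x_n))$, which is the $k=1$ summand. For $1\le i\le n-2$ the term $(-1)^{i+1}D_i\varphi$ equals, verbatim, the $k=i+1$ summand $(-1)^{i+1}\varphi(\alpha(x_0),\dots,\mu(x_i,x_{i+1}),\dots,\alpha(x_n))$; as $i$ runs over $1,\dots,n-2$ this exhausts the summands $k=2,\dots,n-1$. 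The term $(-1)^nD_{n-1}\varphi$ produces $(-1)^n\varphi(\alpha(x_0),\dots,\alpha(x_{n-2}),\mu(x_{n-1},x_n))$, the $k=n$ summand, together with $(-1)^{n+1}\mu(\varphi(x_0,\dots,x_{n-1}),\alpha^{n-1}(x_n))$, the last summand. Finally $D_i\varphi=0$ for $i\ge n$, so $i=n$ contributes nothing. Each summand of \eqref{HomCohomo} is hit exactly once, which gives the identity. (For $n=1$ one reads $D_0$ as carrying both boundary terms, and the check is immediate.)

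\textbf{The commutation relations $D_iD_j=D_jD_{i-1}$, $0\le j<i\le n$.} Here one must keep in mind that in $D_iD_j$ the operator $D_j$ acts on $C_{Hom}^n(\A,\A)$ whereas $D_i$ acts on $C_{Hom}^{n+1}(\A,\A)$, so the exact shape of $D_i$ — the exponents of $\alpha$ and whether its index is of ``left'', ``interior'' or ``right'' type — is governed by degree $n+1$ (the same $D_\ell$ may be of interior type in degree $n+1$ and of right type in degree $n$). I would split the verification into: (i) \emph{interior, non-adjacent}: $j\ge 1$, $i\ge j+2$, $i\le n-1$; (ii) \emph{adjacent}: $j=i-1$ with $j\ge 1$, $i\le n-1$; (iii) \emph{left edge}: $j=0$; (iv) \emph{right edge}: $i=n$ (so $D_{i-1}=D_{n-1}$ carries the term $-\mu(\cdot,\alpha^{\bullet}(x_n))$ and $D_n$ carries the corresponding degree-$(n+1)$ boundary term); together with the evident combinations of (iii)--(iv). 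In case (i) both $D_j$ and $D_i$ simply merge two adjacent arguments at disjoint positions and apply $\alpha$ to the others; substituting and using multiplicativity to rewrite $\alpha(\mu(x_\ell,x_{\ell+1}))=\mu(\alpha(x_\ell),\alpha(x_{\ell+1}))$, one finds on both sides $\varphi$ evaluated on the same tuple, with the two merges at slots $j$ and $i-1$ each carrying a single $\alpha$ and every other slot carrying $\alpha^2$. In case (ii) the merges overlap into a block of three consecutive arguments, built as $\mu(\mu(x_{i-1},x_i),\alpha(x_{i+1}))$ on one side and $\mu(\alpha(x_{i-1}),\mu(x_i,x_{i+1}))$ on the other; Hom-associativity identifies them, and multiplicativity matches the $\alpha$'s on the untouched slots. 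In case (iii) one additionally tracks the term $-\mu(\alpha^{\bullet}(x_0),\cdot)$ inside $D_0$: expanding $D_iD_0\varphi$ and $D_0D_{i-1}\varphi$, the merge parts match as in (i)/(ii), while the two $\mu(\alpha^{\bullet}(x_0),\cdot)$-type contributions are reconciled using one application of Hom-associativity together with multiplicativity and the cochain condition (and for $i=1$ a pair of cross-terms cancels, exactly as in the classical Hochschild computation). Case (iv) is symmetric, using $\alpha\circ\varphi=\varphi\circ(\alpha\otimes\cdots\otimes\alpha)$ to slide the relevant $\alpha$ through $\varphi$; the mixed cases (for instance $j=0$ with $i=n$) combine these with nothing new.

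\textbf{Where the difficulty lies.} Case (i) is purely combinatorial. All the content sits in (ii)--(iv): these are the places where Hom-associativity — and, at the edges, the cochain condition — genuinely intervenes, and where the powers of $\alpha$ generated by applying the two operators in opposite orders must be reconciled. The real obstacle is therefore organizational: maintaining a clean ledger of argument positions and $\alpha$-exponents so that each of these half-dozen cases reduces to a single use of Hom-associativity together with a few uses of multiplicativity. Once the Lemma is proved, the standard reindexing argument applied to $\delta_{Hom}^{n+1}\delta_{Hom}^n=\sum_{i,j}(-1)^{i+j}D_iD_j$ (splitting according to $j<i$ versus $j\ge i$ and using $D_iD_j=D_jD_{i-1}$ on the first part) yields $\delta_{Hom}^{n+1}\circ\delta_{Hom}^n=0$.
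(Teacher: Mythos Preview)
The paper states this Lemma without proof, so there is nothing to compare against beyond the implicit claim that the identities follow by direct inspection. Your proposal is a correct direct verification: the term-matching for $\delta_{Hom}^n=\sum_i(-1)^{i+1}D_i$ is accurate, and your case split for the relations $D_iD_j=D_jD_{i-1}$ correctly isolates where multiplicativity alone suffices (non-adjacent interior), where Hom-associativity enters (adjacent merges and, at the edges, the cancellation of the two $\mu(\alpha^{\bullet}(x_0),\cdot)$- or $\mu(\cdot,\alpha^{\bullet}(x_n))$-terms), and where the cochain condition $\alpha\circ\varphi=\varphi\circ\alpha^{\otimes n}$ is needed to slide an $\alpha$ through $\varphi$. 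Your closing paragraph even reproduces the paper's proof of Proposition~\ref{prop f0}, which is the only use the paper makes of this Lemma.
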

\begin{prop}\label{prop f0}
Let $(\A,\mu,\alpha)$ be a Hom-associative algebra and $\delta_{Hom}^n : C_{Hom}^n(\A,\A) \rightarrow C_{Hom}^{n+1}(\A,\A)$ be the operator  defined in \eqref{HomCohomo} then \begin{equation}\delta_{Hom}^{n+1} \circ \delta_{Hom}^n =0   \;\;\;\hbox{for} \; n \geq1. \end{equation}
\end{prop}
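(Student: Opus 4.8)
The plan is to deduce $\delta_{Hom}^{n+1}\circ\delta_{Hom}^n=0$ from the combinatorial identities recorded in the preceding Lemma, exactly as in the classical Hochschild case (Gerstenhaber). First I would expand the composite using the decomposition $\delta_{Hom}^n=\sum_{i=0}^n(-1)^{i+1}D_i$, so that
\[
\delta_{Hom}^{n+1}\circ\delta_{Hom}^n=\Bigl(\sum_{i=0}^{n+1}(-1)^{i+1}D_i\Bigr)\Bigl(\sum_{j=0}^{n}(-1)^{j+1}D_j\Bigr)=\sum_{0\le j\le n}\ \sum_{0\le i\le n+1}(-1)^{i+j}D_iD_j .
\]
The key input is the simplicial-type relation $D_iD_j=D_jD_{i-1}$ for $0\le j<i$, together with $D_i\varphi=0$ for $i\ge n$ on $C_{Hom}^n$ (so only finitely many terms survive at each stage). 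The standard trick is to split the double sum into the part with $j<i$ and the part with $j\ge i$, apply the commutation relation to the first part to re-index it, and observe that the two pieces cancel term by term. Concretely, in $\sum_{j<i}(-1)^{i+j}D_iD_j$ one substitutes $D_iD_j=D_jD_{i-1}$ and then relabels $(i-1,j)\mapsto(j',i')$ with $i'\le j'$; the resulting sum is the negative of $\sum_{j\ge i}(-1)^{i+j}D_iD_j$ because the sign $(-1)^{i+j}$ is symmetric in the two indices while the roles swap parity of ordering. Hence the total is $0$.

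The one point that genuinely requires care — and is the main obstacle — is bookkeeping the range of indices and the vanishing conditions so that the re-indexing is legitimate: one must check that when $\delta_{Hom}^n\varphi$ (an $(n+1)$-cochain) is fed into $\delta_{Hom}^{n+1}$, the operators $D_i$ on $C_{Hom}^{n+1}(\A,\A)$ that actually occur are $D_0,\dots,D_n$, and that the "boundary" operators $D_0$ and $D_{n-1}$ (which, unlike the interior ones, involve $\mu$ on the outside and the powers $\alpha^{n-1}$) still satisfy $D_iD_j=D_jD_{i-1}$ in the relevant cases. This is precisely what the Lemma asserts, so I would simply invoke it; but I should double-check that the Hom-associativity of $(\A,\mu,\alpha)$ and the multiplicativity condition $\alpha\circ\mu=\mu\circ(\alpha\otimes\alpha)$ are exactly the hypotheses used in proving those identities for the two extreme operators (the $\alpha$-powers $\alpha^{n-1}$ appearing in $D_0$ versus $\alpha^{n}$ needed after one application of $\delta_{Hom}$ must match up, and this is where multiplicativity enters). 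Once the Lemma's relations are in hand, no further computation with $\mu$ or $\alpha$ is needed.

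A cleaner alternative, which I would mention, is to derive the statement from Section 3: there $\delta_{Hom}$ is identified (up to degree shift) with the differential $D_\mu^\alpha=[\mu,\cdot]_\alpha^\Delta$ on the graded Lie algebra $(C_\alpha(\A,\A),[.,.]_\alpha^\Delta)$, and Hom-associativity of $(\A,\mu,\alpha)$ says precisely $[\mu,\mu]_\alpha^\Delta=0$. By Remark~\ref{rem} (applied to the graded Lie algebra $C_\alpha(\A,\A)$ with $\pi=\mu\in C_\alpha^1$), $[\mu,\mu]_\alpha^\Delta=0$ forces $\delta_\mu\circ\delta_\mu=0$. However, since Section 3 comes after this Proposition, the self-contained route through the Lemma's identities $D_iD_j=D_jD_{i-1}$ is the one to write out in full here; I expect the entire argument to be the one-paragraph index-shuffling computation sketched above.
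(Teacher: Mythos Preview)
Your proposal is correct and follows essentially the same route as the paper: the proof there writes $\delta_{Hom}^{n+1}\circ\delta_{Hom}^n=\sum_{0\le i,j\le n}(-1)^{i+j}D_iD_j$, splits into $j<i$ and $i\le j$, applies the Lemma's relation $D_iD_j=D_jD_{i-1}$ to the first sum, re-indexes, and cancels. The paper also records your alternative argument (via $D_\mu^\alpha=[\mu,\cdot]_\alpha^\Delta$ and $[\mu,\mu]_\alpha^\Delta=0$) in a Remark immediately afterward, just as you suggest.
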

\begin{proof}
Indeed
\begin{align*}
\delta_{Hom}^{n+1} \circ \delta_{Hom}^n & =\sum_{0\leq i,j \leq n}(-1)^{i+j}D_iD_j=\sum_{0\leq j<i \leq n}(-1)^{i+j}D_iD_j+\sum_{0\leq i\leq j \leq n}(-1)^{i+j}D_iD_j
\\  \ & = \sum_{0\leq j<i \leq n}(-1)^{i+j}D_jD_{i-1}+\sum_{0\leq i\leq j \leq n}(-1)^{i+j}D_iD_j\\
\ & =\sum_{0\leq j\leq k \leq n}(-1)^{k+j+1}D_jD_k+\sum_{0\leq i\leq j \leq n}(-1)^{i+j}D_iD_j\\
\ & =0.
\end{align*}
\end{proof}
\begin{rem}
A proof of the previous proposition could also be obtained as a consequence of Propositions (\ref{P1}) and (\ref{P2}).
\end{rem}
\begin{defn}
The space of $n-$cocycles is defined by $$Z_{Hom}^n(\A,\A)=\{\varphi\in C_{Hom}^n(\A,\A):\ \delta_{Hom}^n\varphi=0\},$$and the space of $n-$couboundary is defined by $$B_{Hom}^n(\A,\A)=\{\psi=\delta_{Hom}^{n-1}\varphi :\ \varphi \in C^{n-1}(\A,\A) \}.$$
\end{defn}
\begin{lem}
$B_{Hom}^n(\A,\A) \subset Z_{Hom}^n(\A,\A)$.
\end{lem}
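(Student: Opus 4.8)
The plan is to deduce the inclusion directly from Proposition~\ref{prop f0}. Let $\psi\in B_{Hom}^n(\A,\A)$; by definition $\psi=\delta_{Hom}^{n-1}\varphi$ for some $\varphi\in C_{Hom}^{n-1}(\A,\A)$. Applying the $n$-coboundary operator and invoking $\delta_{Hom}^{n}\circ\delta_{Hom}^{n-1}=0$ gives
$$
\delta_{Hom}^n\psi=\delta_{Hom}^n\big(\delta_{Hom}^{n-1}\varphi\big)=0,
$$
so $\psi$ is an $n$-cocycle, i.e. $\psi\in Z_{Hom}^n(\A,\A)$. That is the whole argument at the level of the complex, and it is purely formal once Proposition~\ref{prop f0} is available.

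The one point that actually deserves a verification is that $\psi$ genuinely lies in $C_{Hom}^n(\A,\A)$, that is, that $\delta_{Hom}^{n-1}$ really sends $\alpha$-equivariant $(n-1)$-cochains to $\alpha$-equivariant $n$-cochains; only then is the inclusion $B_{Hom}^n\subset Z_{Hom}^n$ even meaningful. I would check $\alpha\circ\psi(x_0,\dots,x_{n-1})=\psi(\alpha(x_0),\dots,\alpha(x_{n-1}))$ by applying $\alpha$ to each of the three groups of terms in \eqref{HomCohomo} written for $n-1$ in place of $n$. For the first term, $\alpha\big(\mu(\alpha^{n-2}(x_0),\varphi(x_1,\dots,x_{n-1}))\big)=\mu(\alpha^{n-1}(x_0),\varphi(\alpha(x_1),\dots,\alpha(x_{n-1})))$, using multiplicativity $\alpha\circ\mu=\mu\circ(\alpha\otimes\alpha)$ together with the $\alpha$-equivariance of $\varphi$; for the middle sum, $\alpha$ passes inside $\varphi$, turning the $\alpha$'s into $\alpha^2$'s and $\mu(x_{k-1},x_k)$ into $\mu(\alpha(x_{k-1}),\alpha(x_k))$, again by multiplicativity; and the last term is handled exactly as the first. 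Comparing the result term by term with $\psi$ evaluated at $(\alpha(x_0),\dots,\alpha(x_{n-1}))$ yields the desired equality.

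Thus the main (and only mildly computational) obstacle is this equivariance bookkeeping, which is precisely where the multiplicativity hypothesis $\alpha\circ\mu=\mu\circ(\alpha\otimes\alpha)$ is used; once it is in hand, the statement $B_{Hom}^n(\A,\A)\subset Z_{Hom}^n(\A,\A)$ is an immediate consequence of $\delta_{Hom}^{n+1}\circ\delta_{Hom}^n=0$.
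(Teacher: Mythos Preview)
Your argument is correct and matches the paper's approach: the paper states the lemma without proof, treating it as an immediate consequence of Proposition~\ref{prop f0}, which is exactly what you do. Your extra verification that $\delta_{Hom}^{n-1}$ preserves $\alpha$-equivariance (so that $\psi$ really lies in $C_{Hom}^n(\A,\A)$) is a worthwhile sanity check that the paper leaves implicit in its declaration $\delta_{Hom}^{n}:C_{Hom}^n(\A,\A)\to C_{Hom}^{n+1}(\A,\A)$.
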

\begin{defn}
We call the $n^{th}$ cohomology group of the Hom-associative algebra $\A$ the quotient
$$H_{Hom}^n(\A,\A)=\frac{Z_{Hom}^n(\A,\A)}{B_{Hom}^n(\A,\A)}.$$
\end{defn}
\begin{rem}
The cohomology class of an element $\varphi \in C_{Hom}^n(\A,\A)$ is given by the set of elements  $\psi$ such that $\psi=\varphi+\delta^{n-1}f$ where $f$ is a $(n-1)$-cochain.
\end{rem}
\begin{example}
We consider the example (\ref{ex})   of   Hom-associative algebras with $\lambda+\gamma=0$ i.e. the matrix of the twist map  $\alpha$  is $\lambda\cdot \left(
 \begin{array}{cc}
 1 & 0 \\
 -1 & 0 \\
 \end{array}
   \right)
$.  We obtain with respect to the same basis \begin{itemize}
\item $Z_{Hom}^2(\A,\A)=\{ \psi  /\psi(e_1,e_1)=a e_1+b e_2, \; \;\psi(e_i,e_j)=c e_2\quad \hbox{if }\; (i,j)\;\neq(1,1)\}$
\item $B_{Hom}^2(\A,\A)=\{\delta f/\delta f(e_1,e_1)=a e_1 + b e_2,\delta f(e_i,e_j)=(a+b)e_2\quad \hbox{if }\;(i,j)\;\neq(1,1)\}$
     \end{itemize}
then
 \begin{itemize}
        \item $H_{Hom}^2(\A,\A)=\{\psi/\psi(e_1,e_1)=ae_1+be_2,\psi(e_i,e_j)=ce_2\quad \hbox{if }\;(i,j)\neq(1,1)\; c\neq a+b\}$
      \item $ H_{Hom}^3(\A,\A)=0$
      \end{itemize}
\end{example}

\subsection{Cohomology of multiplicative  Hom-Lie algebras}
The purpose of  this section is to
  construct cochain complex $C_{HL}^{*}(\LX,\LX)$ of a multiplicative Hom-Lie algebra $\LX$ with coefficients in $\LX$ that defines a cohomology $H_{HL}^*(\LX,\LX)$.

Let $(\LX,[.,.],\alpha)$ be a Hom-Lie algebra. We define,  for $n\geq 1$, a $\K$-vector space $C_{HL}^n(\LX,\LX)$ of $n$-linear alternating cochains as follows:\\a cochain $\varphi\in C_{HL}^{n}(\LX,\LX)$ is an $n$-linear alternating map $\varphi : \LX^n \rightarrow \LX \; \hbox{satisfying}$ $$\alpha \circ \varphi(x_0,...,x_{n-1})=\varphi\big(\alpha (x_0),\alpha(x_1),...,\alpha(x_{n-1})\big) \; \hbox{for all}\; x_0,x_1,...,x_{n-1} \in \LX.$$
\begin{defn}
 We call, for $n\geq1$,  $n$-coboundary operator of the Hom-Lie algebra $(\LX,[.,.],\alpha)$ the linear map $\delta_{HL}^{n} : C_{HL}^n(\LX,\LX) \rightarrow C_{HL}^{n+1}(\LX,\LX)$ defined by
\begin{align}\label{HLcohomo}
\delta_{HL}^n\varphi(x_0,x_1,...,x_n)& =\sum_{k=0}^n(-1)^k \big[\alpha^{n-1}(x_k),\varphi(x_0,...,\widehat{x_k},...,x_n)\big]\\
 \ & +\sum_{0\leq i<j\leq n}\varphi([x_i,x_j],\alpha(x_0),...,\widehat{x_i},...,\widehat{x_j},...,\alpha(x_n))\nonumber
\end{align}
where $\widehat{x_k}$ designed that $x_k$ is omitted.
\end{defn}
\begin{defn}
The space of $n-$cocycles is defined by $$Z_{HL}^n(\LX,\LX)=\{\varphi \in \tilde{C}^n(\LX,\LX):\ \delta_{HL}^n\varphi=0\},$$and the space of $n-$couboundaries is defined by $$B_{HL}^n(\LX,\LX)=\{\psi=\delta_{HL}^{n-1}\varphi :\  \varphi \in \tilde{C}^{n-1}(\LX,\LX) \}.$$
\end{defn}
\begin{prop}\label{prop f1}
Let $(\LX,[.,.],\alpha)$ be a Hom-Lie algebra and $\delta_{HL}^{n} :C_{HL}^n(\LX,\LX) \rightarrow C_{HL}^{n+1}(\LX,\LX)$ be the operator defined \eqref{HLcohomo}.    Then
\begin{equation}\delta_{HL}^{n+1} \circ \delta_{HL}^n =0 \quad \;\hbox{for} \;\quad  n \geq1.\end{equation}
\end{prop}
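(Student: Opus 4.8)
The plan is to prove $\delta_{HL}^{n+1}\circ\delta_{HL}^n=0$ for $\varphi\in C_{HL}^n(\LX,\LX)$ by a direct expansion parallel to the classical Chevalley--Eilenberg computation, the one new point being a careful bookkeeping of the powers of the twist map $\alpha$. Write $\delta_{HL}^n=A+B$, where $A\varphi$ is the ``bracket in front'' summand (the first sum in \eqref{HLcohomo}) and $B\varphi$ is the ``bracket of two arguments'' summand (the second sum). Since $\delta_{HL}^{n+1}$ decomposes the same way, $\delta_{HL}^{n+1}\circ\delta_{HL}^n$ is the sum of the four blocks $A(A\varphi)$, $A(B\varphi)$, $B(A\varphi)$, $B(B\varphi)$; I would expand each at $(x_0,\dots,x_{n+1})$ and regroup all summands, according to their shape, into three families: \emph{(I)} doubly nested brackets $\big[\alpha^n(x_p),[\alpha^{n-1}(x_q),\varphi(\dots)]\big]$; \emph{(II)} terms $\big[\alpha^n(x_p),\varphi\big([x_i,x_j],\alpha(x_\bullet),\dots\big)\big]$; \emph{(III)} terms $\varphi\big([[x_i,x_j],\alpha(x_m)],\alpha^2(x_\bullet),\dots\big)$ or $\varphi\big(\alpha([x_i,x_j]),\alpha([x_k,x_l]),\alpha^2(x_\bullet),\dots\big)$, the dots standing for $\alpha^\bullet$-images of the remaining variables.

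Next I would show that each family vanishes. Family \emph{(II)} receives contributions only from $A(B\varphi)$ and from the slots of $B(A\varphi)$ whose inner leading bracket does not act on $[x_i,x_j]$; since the ``bracket of two arguments'' operation decorates the non-bracketed slots by $\alpha$ in both cases, these two collections coincide term by term with opposite signs and cancel outright. Family \emph{(I)} receives the terms of $A(A\varphi)$ and the one remaining slot of $B(A\varphi)$, whose inner leading bracket \emph{does} act on $[x_i,x_j]$; that slot produces $\big[\alpha^{n-1}([x_i,x_j]),\alpha\big(\varphi(x_0,\dots)\big)\big]$, which by multiplicativity of $\alpha$ and the Hom-cochain identity $\alpha\big(\varphi(y_0,\dots,y_{n-1})\big)=\varphi\big(\alpha(y_0),\dots,\alpha(y_{n-1})\big)$ equals $\big[[\alpha^{n-1}(x_i),\alpha^{n-1}(x_j)],\alpha\big(\varphi(x_0,\dots)\big)\big]$; applying the Hom-Jacobi identity $\circlearrowleft_{u,v,w}\big[\alpha(u),[v,w]\big]=0$ with $u=\varphi(x_0,\dots)$, $v=\alpha^{n-1}(x_i)$, $w=\alpha^{n-1}(x_j)$ rewrites this as exactly the relevant combination of $A(A\varphi)$-terms, with the numerical signs of the classical identity, so the family cancels.

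For family \emph{(III)}, the terms $\varphi\big(\alpha([x_i,x_j]),\alpha([x_k,x_l]),\dots\big)$ occur in pairs obtained by interchanging the two bracketed pairs, and since $\varphi$ is alternating while the sign attached to $\{i,j\}$ versus $\{k,l\}$ is symmetric (the same combinatorial fact as in the classical complex) these pairs cancel; the terms $\varphi\big([[x_i,x_j],\alpha(x_m)],\alpha^2(x_\bullet),\dots\big)$, grouped by the unordered triple $\{i,j,m\}$, share the very same list of remaining arguments, hence (after using skew-symmetry of the bracket and the classical signs) add up to $\varphi$ applied to $\circlearrowleft_{a,b,c}[\alpha(x_a),[x_b,x_c]]$ in the first slot and to $\alpha^2(x_\bullet),\dots$ elsewhere, which is $0$ by the Hom-Jacobi identity. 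This exhausts all summands.

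The step I expect to be the real obstacle is not the sign combinatorics — verbatim the classical Chevalley--Eilenberg bookkeeping — but checking, at each cancellation, that the two matched expressions carry the same power of $\alpha$ in every slot; this is precisely where the standing hypotheses are used, $\alpha$ being multiplicative so that powers of $\alpha$ slide across brackets and $\varphi$ being a Hom-cochain so that they slide across $\varphi$, which together bring every summand of $B(A\varphi)$ and $B(B\varphi)$ into the normal forms \emph{(I)}--\emph{(III)}. Finally, once the graded Lie algebra $\big(\tilde{C}_\alpha(\LX,\LX),[.,.]_\alpha^{\wedge}\big)$ of Section~3 is available, Proposition~\ref{prop f1} also follows immediately from Remark~\ref{rem}: the bracket $[.,.]$ of $\LX$ has degree~$1$, the Hom-Jacobi identity is precisely $\big[[.,.],[.,.]\big]_\alpha^{\wedge}=0$, and $\delta_{HL}$ agrees (up to the degree shift) with $[[.,.],-]_\alpha^{\wedge}$, so its square vanishes — the Hom-Lie counterpart of the Remark after Proposition~\ref{prop f0} and of Propositions~\ref{P1} and \ref{P2}.
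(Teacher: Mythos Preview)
Your proposal is correct and aligns with both routes the paper indicates: the paper's proof consists of a single sentence saying the result ``can be obtained by a long straightforward calculation or as a consequence of propositions (\ref{P3}) and (\ref{P4})'', and you supply a careful outline of the first route (the $A/B$ decomposition, the three families, and the use of multiplicativity and the Hom-cochain condition to align the $\alpha$-powers) while also noting the second route at the end. Your write-up is in fact more detailed than the paper's own proof, but the strategy is the same.
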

\begin{proof}
The proof can be  obtained by a long straightforward calculation or as a consequence of propositions (\ref{P3}) and (\ref{P4}).
\end{proof}
\begin{rem}
One has $B_{HL}^n(\LX,\LX) \subset Z_{HL}^n(\LX,\LX)$.
\end{rem}
\begin{defn}We call the $n^{th}$ cohomology group of the Hom-Lie algebra $\LX$ the quotient
$$H_{HL}^n(\LX,\LX)=\frac{Z_{HL}^n(\LX,\LX)}{B_{HL}^n(\LX,\LX)}.$$
\end{defn}

\section{Gerstenhaber algebra and Nijenhuis-Richardson algebra}
We define in this section two graded Lie algebras on the space of multilinear (resp. alternating multilinear) mappings which are multiplicative with respect to a linear map $\alpha$.

\subsection{The algebra $C_\alpha(\A,\A)$}\label{sec3}
We provide in this section a variation of Gerstenhaber algebra supplying the set of all multiplicative  multilinear maps on a given vector space.  Let $A$ be a vector space and  $\alpha : A\rightarrow A$ be a linear map.
We denote by $C_\alpha^n(A,A)$ the space of all $(n+1)$-linear maps $\varphi :
A^{ \times(n+1)}\rightarrow A$ satisfying
\begin{equation}\label{MultipliMultiLinearMap}\alpha
(\varphi(x_0,...,x_{n}))=\varphi\big(\alpha(x_0),...,\alpha(x_{n})\big)\;\hbox{for
all}\;x_0,...,x_{n} \in A
\end{equation}
We set
$$C_\alpha(A,A)=\bigoplus_{n\geq-1}C_\alpha^n(A,A).$$
 If $\varphi \in C_\alpha^a(A,A)
\;\hbox{and}\;\psi\in C_\alpha^b(A,A)$ where $a\geq0, b\geq0$ then we define $j_{\varphi}^{\alpha}(\psi)
\in C_\alpha^{a+b+1}(A,A)$ by
$$j_{\varphi}^{\alpha}(\psi)(x_0,...,x_{a+b})=\sum_{k=0}^b(-1)^{ak}  \psi\big(\alpha^a(x_0),...,\alpha^a(x_{k-1}),\varphi(x_k,...,x_{k+a}),\alpha^a(x_{a+k+1}),...,\alpha^a(x_{a+b})\big).$$
 and $$[\psi,\varphi]_\alpha^\Delta=j_\psi^\alpha(\varphi)-(-1)^{ab}j_\varphi^\alpha(\psi)$$

The bracket $[.,.]_\alpha^\Delta$ is called Gerstenhaber bracket.

\begin{rem}
If $a=b=1$ we have $j_{\psi}\varphi(x_0,x_1,x_{2})=\varphi(\psi(x_0,x_{1}),\alpha(x_{2}))-\varphi(\alpha (x_0),\psi(x_{1},x_{2}))$ which is denoted in \cite{Makhl Silv Not} by $\varphi\circ_\alpha \psi$. The particular case, where $\varphi=\psi$ corresponds to the Hom-associator.
\end{rem}

\begin{lem}We have
$j_{[\varphi,\psi]_\alpha^\Delta}=[j_\varphi^\alpha,j_\psi^\alpha]\;\hbox{for all }\;\varphi,\psi \in C_\alpha(A,A),$ where $[.,.]$ is the graded commutator on $End(C_\alpha(A,A)).$
\end{lem}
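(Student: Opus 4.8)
The statement $j_{[\varphi,\psi]_\alpha^\Delta}=[j_\varphi^\alpha,j_\psi^\alpha]$ is the Hom-analogue of the classical identity from \cite{Gerst def, Lecomte} that $j_{[K_1,K_2]^\Delta}=[j_{K_1},j_{K_2}]$, and the proof should follow the same combinatorial pattern, carefully tracking the powers of $\alpha$ that are inserted. First I would fix $\varphi\in C_\alpha^a(A,A)$ and $\psi\in C_\alpha^b(A,A)$ and reduce to proving the key associativity-type relation for the operators $j$: namely that, for $\chi\in C_\alpha^c(A,A)$,
\begin{equation*}
j_\varphi^\alpha\big(j_\psi^\alpha(\chi)\big)-(-1)^{ab}\,j_\psi^\alpha\big(j_\varphi^\alpha(\chi)\big)=j_{j_\varphi^\alpha(\psi)}^\alpha(\chi)-(-1)^{ab}\,j_{j_\psi^\alpha(\varphi)}^\alpha(\chi),
\end{equation*}
since expanding $[\varphi,\psi]_\alpha^\Delta=j_\varphi^\alpha(\psi)-(-1)^{ab}j_\psi^\alpha(\varphi)$ and linearity of $\chi\mapsto j_{(\cdot)}^\alpha(\chi)$ then gives the claim. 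The graded commutator on the right is $[j_\varphi^\alpha,j_\psi^\alpha]=j_\varphi^\alpha\circ j_\psi^\alpha-(-1)^{ab}j_\psi^\alpha\circ j_\varphi^\alpha$, so this is exactly what must be checked.

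**Main computation.** I would expand $j_\varphi^\alpha(j_\psi^\alpha(\chi))$ directly on arguments $x_0,\dots,x_{a+b+c}$. By the definition of $j$, this is a double sum: an outer sum over the position $k$ where $\varphi(x_k,\dots,x_{k+a})$ is inserted into $j_\psi^\alpha(\chi)$ (evaluated on the shifted arguments, all pushed through $\alpha^a$ except the slot holding $\varphi$), and an inner sum over the position where $\psi(\cdots)$ sits. The $\alpha$-multiplicativity hypothesis \eqref{MultipliMultiLinearMap} is crucial here: when $\psi$ receives arguments of the form $\alpha^a(x_i)$, one uses $\psi(\alpha^a(x_{i_0}),\dots)=\alpha^a(\psi(x_{i_0},\dots))$, and likewise $\alpha^a\circ\varphi=\varphi\circ\alpha^a$, so that all the twisting collapses consistently to powers like $\alpha^{a+b}$ on the untouched arguments and $\alpha^b$ (resp. $\alpha^a$) on the inner/outer inserted blocks. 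The resulting terms split into two types according to whether the $\varphi$-block lies \emph{inside} the $\psi$-block (i.e. $\varphi$ is one of the arguments fed to $\psi$) or the two blocks are \emph{disjoint}. The ``inside'' terms, after using multiplicativity, are precisely $j_{j_\varphi^\alpha(\psi)}^\alpha(\chi)$ up to sign (because $j_\varphi^\alpha(\psi)$ is by definition the sum over ways of nesting $\varphi$ inside $\psi$). The ``disjoint'' terms in $j_\varphi^\alpha j_\psi^\alpha(\chi)$ are matched, with the sign $(-1)^{ab}$ coming from reordering the two insertions, against the corresponding disjoint terms of $(-1)^{ab}j_\psi^\alpha j_\varphi^\alpha(\chi)$, so they cancel in the commutator; symmetrically, $-(-1)^{ab}j_\psi^\alpha j_\varphi^\alpha(\chi)$ contributes the ``$\psi$ inside $\varphi$'' terms giving $-(-1)^{ab}j_{j_\psi^\alpha(\varphi)}^\alpha(\chi)$. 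Assembling these four collections proves the displayed identity.

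**Where the difficulty lies.** The conceptual content is identical to the classical Gerstenhaber computation, so the real obstacle is purely bookkeeping: getting the index ranges and the exponents of $(-1)$ right in the two nested summations, and — the genuinely Hom-specific point — verifying that every occurrence of $\alpha^a$, $\alpha^b$ introduced by the two insertion operators combines, via \eqref{MultipliMultiLinearMap}, into exactly the twist that appears in the definitions of $j_{j_\varphi^\alpha(\psi)}^\alpha$ and $j_{j_\psi^\alpha(\varphi)}^\alpha$. It is worth isolating as a sub-lemma the simple fact that for $\varphi\in C_\alpha^a$ one has $\alpha^m\circ\varphi=\varphi\circ(\alpha^m\times\cdots\times\alpha^m)$ for all $m\ge 0$ (immediate by induction from \eqref{MultipliMultiLinearMap}), since this is invoked repeatedly. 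Once the exponent bookkeeping is checked on the ``nested'' terms and the sign on the swap of two ``disjoint'' insertions is confirmed to be $(-1)^{ab}$, the cancellation of disjoint terms is automatic and the proof closes.
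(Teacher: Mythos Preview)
Your proposal is correct and follows essentially the same route as the paper: expand $j_\varphi^\alpha\big(j_\psi^\alpha(\chi)\big)$ as a double sum, split the terms according to whether the $\varphi$-block lies inside the $\psi$-block or the two blocks are disjoint, use the multiplicativity $\alpha^m\circ\varphi=\varphi\circ(\alpha^m)^{\times(a+1)}$ to cancel the disjoint terms of the commutator against each other (the paper labels these pieces $A,C,D,F$ and checks $A-(-1)^{ab}F=0$, $C-(-1)^{ab}D=0$), and identify the remaining nested terms (the paper's $B-(-1)^{ab}E$) with $j^\alpha_{[\varphi,\psi]_\alpha^\Delta}(\chi)$. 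The only difference is presentational: the paper writes out the six sums explicitly, whereas you describe the same cancellation pattern in words.
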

\begin{proof}
Let $\varphi\in C_\alpha^a(A,A),\psi\in C_\alpha^b(A,A),\xi \in C_\alpha^c(A,A)$ $$[j_\varphi^\alpha,j_\psi^\alpha](\xi)(x_0,....,x_{a+b+c})=\big(j_\varphi^\alpha(j_\psi^\alpha \xi)-(-1)^{ab}j_\psi^\alpha (j_\varphi^\alpha\xi)\big)(x_0,....,x_{a+b+c})$$ $$=S_1-(-1)^{ab} S_2.$$
 where $$S_1=j_\varphi^\alpha\big(j_\psi^\alpha( \xi)\big)(x_0,...,x_{a+b+c})\;\hbox{and}\; S_2=j_\psi^\alpha (j_\varphi^\alpha\xi)\big)(x_0,....,x_{a+b+c}).$$
  We have $$S_1=\sum_{k=0}^{b+c}(-1)^{ak}j_\psi^\alpha (\xi)\big(\alpha^a(x_0),..,\alpha^a(x_{k-1}),\varphi(x_k,..,x_{k+a}),\alpha^a(x_{a+k+1}),..,\alpha^a(x_{a+b+c})\big)$$ $$=A+B+C$$
 where
 \begin{align*}
 A=\sum_{k=b+1}^{b+c}\sum_{i=0}^{k-(b+1)}(-1)^{ak+bi} \xi\big(\alpha^{a+b}(x_0),...,\alpha^{a+b}(x_{i-1}),\psi(\alpha^a(x_i),...,\alpha^a(x_{i+b}))
,\alpha^{a+b}(x_{i+b+1}),...,\alpha^{a+b}(x_{k-1}),\\ \alpha^b(\varphi(x_k,...,x_{k+a})),\alpha^{a+b}(x_{a+k+1}),...,\alpha^{a+b}(x_{a+b+c})\big)
\end{align*}
\begin{align*}
B=\sum_{k=0}^{c}\sum_{i=k-b}^{k}(-1)^{ak+bi} \xi\big(\alpha^{a+b}(x_0),...,\alpha^{a+b}(x_{i-1}),\psi(\alpha^a(x_i),...,\alpha^{a}(x_{k-1}),
\varphi(x_k,...,x_{k+a}),\alpha^a(x_{k+a+1}),\\ ...,\alpha^a(x_{a+b+i})),\alpha^{a+b}(x_{a+b+i+1}),...,\alpha^{a+b}(x_{a+b+c})\big)
\end{align*}
\begin{align*}
C=\sum_{k=0}^{c-1}\sum_{i=a+k+1}^{a+c}(-1)^{ak+b(i-a)} \xi\big(\alpha^{a+b}(x_0),...,\alpha^{a+b}(x_{k-1}),\alpha^b(\varphi(x_k,...,x_{k+a})) ,\alpha^{a+b}(x_{a+k+1}), ...,\\ \psi(\alpha^a(x_i),...,\alpha^a(x_{i+b})),
...,\alpha^{a+b}(x_{a+b+c})\big)
\end{align*}
 We obtain $S_2$ if we permute $\varphi$ and $\psi $.
 $$S_2=D+E+F$$ where
 \begin{align*}
 D=\sum_{k=a+1}^{a+c}\sum_{i=0}^{k-(a+1)}(-1)^{ak+bi}  \xi\big(\alpha^{a+b}(x_0),...,\alpha^{a+b}(x_{i-1}),\varphi(\alpha^b(x_i),...,\alpha^b(x_{i+a}))
,\alpha^{a+b}(x_{i+a+1}),...,\alpha^{a+b}(x_{k-1}) ,\\ \alpha^a(\psi(x_k,...,x_{k+b})),\alpha^{a+b}(x_{a+k+1}),...,\alpha^{a+b}(x_{a+b+c})\big)
\end{align*}
 \begin{align*}
  E=\sum_{k=0}^{c}\sum_{i=k-b}^{k}(-1)^{ak+bi}\xi\big(\alpha^{a+b}(x_0),...,\alpha^{a+b}(x_{i-1}),\varphi(\alpha^b(x_i),
  ...,\alpha^{b}(x_{k-1}),\psi(x_k,...,x_{k+b}),\alpha^b(x_{k+b+1}), ...,\\ \alpha^b(x_{a+b+i})),\alpha^{a+b}(x_{a+b+i+1}),...,\alpha^{a+b}(x_{a+b+c})\big)
  \end{align*}
 \begin{align*}
 F=\sum_{k=0}^{c-1}\sum_{i=b+k+1}^{b+c}(-1)^{bk+a(i-b)} \xi\big(\alpha^{a+b}(x_0),...,\alpha^{a+b}(x_{k-1}),\alpha^a(\psi(x_k,...,x_{k+b})),\alpha^{a+b}(x_{b+k+1}), ...,\\ \varphi(\alpha^b(x_i),...,\alpha^b(x_{i+a})),
...,\alpha^{a+b}(x_{a+b+c})\big)
  \end{align*}
Since $$\alpha \circ \varphi(x_0,...,x_{a})=\varphi\big(\alpha (x_0),\alpha(x_1),...,\alpha(x_{a})\big),$$ then $$\alpha^b(\varphi(x_0,...,x_{a}))=\varphi\big(\alpha^b (x_0),\alpha^b(x_1),...,\alpha^b(x_{a})\big).$$
\\So, $A-(-1)^{ab}F=0,\; C-(-1)^{ab}D=0$ and
 \begin{align*} [j_\varphi^\alpha,j_\psi^\alpha](\xi)&= B-(-1)^{ab}E \\  \ & =\sum_{k=0}^{c}\sum_{i=k-b}^{k}(-1)^{ak+bi} \xi\big(\alpha^{a+b}(x_0),...,\alpha^{a+b}(x_{i-1}),\psi(\alpha^a(x_i),...,\alpha^{a}(x_{k-1}),\varphi(x_k,...,
 x_{k+a}),\\  \ &  \alpha^a(x_{k+a+1}), ..., \alpha^a(x_{a+b+i})),\alpha^{a+b}(x_{a+b+i+1}),...,\alpha^{a+b}(x_{a+b+c})\big)\\  \ & -(-1)^{ab} \sum_{k=0}^{c}\sum_{i=k-a}^{k}(-1)^{ai+bk} \xi\big(\alpha^{a+b}(x_0) ,..., \alpha^{a+b}(x_{i-1}), \varphi(\alpha^b(x_i),...,\alpha^{b}(x_{k-1}),
 \psi(x_k,...,x_{k+b}),\\  \ & \alpha^b(x_{k+b+1}),..., \alpha^b(x_{a+b+i})),\alpha^{a+b}(x_{a+b+i+1}),...,\alpha^{a+b}(x_{a+b+c})\big)
 \\ \ &=j_{[\varphi,\psi]_\alpha^\Delta}(\xi).
 \end{align*}
\end{proof}
\begin{thm}
The pair $\big(C_\alpha(V,V),[.,.]_\alpha^\Delta \big)$
 is a graded Lie algebra.
\end{thm}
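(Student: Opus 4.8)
The plan is to establish the three defining properties of a $\Z$-graded Lie algebra for the pair $\big(C_\alpha(V,V),[.,.]_\alpha^\Delta\big)$ directly from the definitions, using the bracket formula $[\psi,\varphi]_\alpha^\Delta = j_\psi^\alpha(\varphi) - (-1)^{ab} j_\varphi^\alpha(\psi)$. First I would verify the grading: the decomposition $C_\alpha(V,V) = \bigoplus_{n\geq -1} C_\alpha^n(V,V)$ is given, and the formula for $j_\varphi^\alpha(\psi)$ for $\varphi \in C_\alpha^a$, $\psi \in C_\alpha^b$ produces an element of $C_\alpha^{a+b+1}$; one must check that $j_\varphi^\alpha(\psi)$ indeed satisfies the multiplicativity condition \eqref{MultipliMultiLinearMap}, which follows because $\alpha$ commutes with $\varphi$, with $\psi$, and with the powers $\alpha^a$ appearing as placeholders — so applying $\alpha$ to $j_\varphi^\alpha(\psi)(x_0,\dots,x_{a+b})$ just pushes $\alpha$ inside each argument slot. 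Hence $[C_\alpha^a, C_\alpha^b]_\alpha^\Delta \subset C_\alpha^{a+b+1}$, which is the correct grading convention for this shifted bracket (the relevant degree of a $p$-cochain being $p$, so that $a+b+1$ behaves additively with the shift built into the superscripts).

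Next I would check graded skew-symmetry in the form \eqref{gradedSkewSym}. This is immediate from the very definition: $[\psi,\varphi]_\alpha^\Delta = j_\psi^\alpha(\varphi) - (-1)^{ab} j_\varphi^\alpha(\psi)$, so swapping $\varphi$ and $\psi$ gives $[\varphi,\psi]_\alpha^\Delta = j_\varphi^\alpha(\psi) - (-1)^{ab} j_\psi^\alpha(\varphi) = -(-1)^{ab}\big(j_\psi^\alpha(\varphi) - (-1)^{ab} j_\varphi^\alpha(\psi)\big) = -(-1)^{ab}[\psi,\varphi]_\alpha^\Delta$, using $(-1)^{ab} = (-1)^{ba}$ and $(-1)^{2ab}=1$. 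No computation beyond sign bookkeeping is needed here.

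The substantive part is the graded Jacobi identity \eqref{gradedJacobi}, and this is exactly where the preceding Lemma does the work. The identity $j_{[\varphi,\psi]_\alpha^\Delta} = [j_\varphi^\alpha, j_\psi^\alpha]$, where the right-hand bracket is the graded commutator in $\End(C_\alpha(V,V))$, is the Hom-analogue of the classical formula $j_{[K_1,K_2]^\Delta} = [j_{K_1},j_{K_2}]$ that Gerstenhaber used, and the classical argument shows that such a formula forces the Jacobi identity for $[.,.]_\alpha^\Delta$. Concretely I would argue: the association $\varphi \mapsto j_\varphi^\alpha$ is a (graded) linear map from $C_\alpha(V,V)$ into $\End(C_\alpha(V,V))$ which is injective — one recovers $\varphi$, up to the obvious normalization, by evaluating $j_\varphi^\alpha$ on a cochain of degree $-1$, i.e. on an element of $A$ itself — and the Lemma says it intertwines $[.,.]_\alpha^\Delta$ with the graded commutator. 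Since the graded commutator on any associative (here: $\End$) algebra automatically satisfies the graded Jacobi identity, pulling this back along the injective map $\varphi \mapsto j_\varphi^\alpha$ yields the graded Jacobi identity for $[.,.]_\alpha^\Delta$ on $C_\alpha(V,V)$.

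The main obstacle — already absorbed by the Lemma stated just above — is the long index bookkeeping in verifying $j_{[\varphi,\psi]_\alpha^\Delta} = [j_\varphi^\alpha, j_\psi^\alpha]$, in particular tracking the powers of $\alpha$ that appear in the nested insertions and checking that the would-be obstruction terms ($A - (-1)^{ab}F$ and $C - (-1)^{ab}D$ in the notation of the Lemma's proof) cancel thanks to multiplicativity $\alpha^b \circ \varphi = \varphi \circ (\alpha^b)^{\otimes(a+1)}$. Given the Lemma, the remaining steps in this theorem are essentially formal: grading compatibility is a direct check, graded skew-symmetry is a sign manipulation, and graded Jacobi follows from injectivity of $\varphi\mapsto j_\varphi^\alpha$ together with the Jacobi identity for the graded commutator on $\End(C_\alpha(V,V))$. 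I would also remark that, as in Remark \ref{rem}, this structure is exactly what is needed to obtain the coboundary operators $D^\alpha_\mu = [\mu,\cdot]_\alpha^\Delta$ and $D^\alpha_{[.,.]} = [[.,.],\cdot]_\alpha^\Delta$ of the subsequent sections.
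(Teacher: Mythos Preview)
Your proof is correct and shares the paper's core strategy: both derive the graded Jacobi identity from the Lemma $j_{[\varphi,\psi]_\alpha^\Delta}^\alpha = [j_\varphi^\alpha, j_\psi^\alpha]$, and the skew-symmetry check is identical. The paper's execution differs only in how Jacobi is extracted from the Lemma: rather than invoking injectivity of $\varphi \mapsto j_\varphi^\alpha$, it expands the Jacobiator $\circlearrowleft_{\varphi,\psi,\phi}(-1)^{ac}\big[\varphi,[\psi,\phi]_\alpha^\Delta\big]_\alpha^\Delta$ directly and regroups the resulting twelve terms into three expressions of the form $\big([j_\bullet^\alpha, j_\bullet^\alpha] - j_{[\bullet,\bullet]_\alpha^\Delta}^\alpha\big)(\bullet)$, each of which vanishes by the Lemma, so no injectivity claim is needed. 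Your injectivity route is also valid, but the witness you cite should be corrected: the insertion formula $j_\varphi^\alpha(\psi)$ is only given for $b\geq 0$, so evaluating on a degree $-1$ cochain is not covered; the clean witness is the identity map $\mathrm{id}\in C_\alpha^0(A,A)$, for which one reads off $j_\varphi^\alpha(\mathrm{id})=\varphi$ directly from the defining sum.
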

\begin{proof}
The proof is based on the previous Lemma.
 Let  $\varphi\in C_\alpha^a(V,V),\psi\in C_\alpha^b(V,V),\phi \in C_\alpha^c(V,V).$
\begin{enumerate}
\item \textbf{Skew-symmetry}
\begin{align*}[\varphi,\psi]_\alpha^\Delta &=j_\varphi^\alpha\psi-(-1)^{ab}j_\psi^\alpha\varphi \\ \ & =(-1)^{ab+1}\big(j_{\psi}^\alpha \varphi-(-1)^{ab}j_\varphi^\alpha\psi\big) \\ \ & =(-1)^{ab+1}[\psi,\varphi]_\alpha^\Delta.
\end{align*}
\item \textbf{Graded Hom-Jacobi identity}
\begin{align*}
\circlearrowleft_{\varphi,\psi,\phi}(-1)^{ac}\big[\varphi,[\psi,\phi]_\alpha^\Delta\big]_\alpha^\Delta & =
(-1)^{ac}j_\varphi^\alpha[\psi,\phi]_\alpha^\Delta-(-1)^{ab}j_{[\psi,\phi]_\alpha^\Delta}\varphi\\ \ &
+(-1)^{ba} j_\psi^\alpha[\phi,\varphi]_\alpha^\Delta-(-1)^{bc}j_{[\phi,\varphi]_\alpha^\Delta}\psi
\\ \ &  +(-1)^{cb} j_\phi^\alpha[\varphi,\psi]_\alpha^\Delta-(-1)^{ca}j_{[\varphi,\psi]_\alpha^\Delta}\phi\\ \ & =(-1)^{ac}j_\varphi^\alpha(j_\psi^\alpha \phi-(-1)^{cb}j_\phi^\alpha\psi)-(-1)^{ab}j_{[\psi,\phi]_\alpha^\Delta}\varphi\\ \ & +(-1)^{ba}j_\psi^\alpha(j_\phi^\alpha\varphi-(-1)^{ac}j_\varphi^\alpha\phi)-(-1)^{bc}j_{[\phi,\varphi]_\alpha^\Delta}\psi \\ \ & +(-1)^{cb}j_\phi^\alpha (j_\varphi^\alpha\psi-(-1)^{ab}j_\psi^\alpha\varphi)-(-1)^{ca}j_{[\varphi,\psi]_\alpha^\Delta}\phi.
\end{align*}
                Organizing  these terms leads to
\begin{align*}
\circlearrowleft_{\varphi,\psi,\phi}(-1)^{ac}\big[\varphi,[\psi,\phi]_\alpha^\Delta\big]_\alpha^\Delta & =(-1)^{ba}\big(j_\psi^\alpha(j_\phi^\alpha\varphi)-(-1)^{cb}j_\phi^\alpha (j_\psi^\alpha\varphi)-j_{[\psi,\phi]_\alpha^\Delta}\varphi\big)\\ \ &
 +(-1)^{cb}\big(j_\phi^\alpha (j_\varphi^\alpha\psi)-(-1)^{ac}j_\varphi^\alpha(j_\phi^\alpha\psi) -j_{[\phi,\varphi]_\alpha^\Delta}\psi\big) \\ \ &
  +(-1)^{ac}\big(j_\varphi^\alpha (j_\psi^\alpha \phi)-(-1)^{ab}j_\psi^\alpha (j_\varphi^\alpha\phi)-j_{[\varphi,\psi]_\alpha^\Delta}\phi\big)\\ \ & =(-1)^{ba}\big([j_\psi^\alpha,j_\phi^\alpha]-j_{[\psi,\phi]_\alpha^\Delta}\big)\varphi\\ \ & +(-1)^{cb}\big([j_\phi^\alpha,j_\varphi^\alpha]-j_{[\phi,\varphi]_\alpha^\Delta}\big)\psi\\ \ & +(-1)^{ac}\big([j_\varphi^\alpha,j_\psi^\alpha]-j_{[\varphi,\psi]_\alpha^\Delta}\big)\phi.
  \end{align*}
  Using the previous lemma we get
  $$\circlearrowleft_{\varphi,\psi,\phi}(-1)^{ac}\big[\varphi,[\psi,\phi]_\alpha^\Delta\big]_\alpha^\Delta=0.$$
  \end{enumerate}
  \end{proof}
\begin{prop}\label{P1}
Let $(\A,\mu,\alpha)$ be a Hom-associative algebra. Let  $ D_\mu^\alpha : C_\alpha(\A,\A) \rightarrow
C_\alpha(\A,\A)$ be a linear map defined by $$D_\mu^\alpha \phi=[\mu,\phi]_\alpha^\Delta \;\quad
\hbox{for all}\; \phi\in C_\alpha(\A,\A).$$
Then  $ D_{\mu}^\alpha $ is a differential operator, and for $\phi \in C_\alpha^{n-1}(\A,\A)$ we have
$D_\mu^\alpha \phi=-\delta_{Hom}^n \phi.$
\end{prop}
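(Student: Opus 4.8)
The plan is to prove the two assertions in Proposition \ref{P1} separately, with the second (the identification $D_\mu^\alpha\phi=-\delta_{Hom}^n\phi$) doing most of the work and the first (that $D_\mu^\alpha$ is a differential) then following almost for free. First I would observe that since $(\A,\mu,\alpha)$ is a (multiplicative) Hom-associative algebra, we have $\mu\in C_\alpha^1(\A,\A)$ and the Hom-associativity identity $\mu(\alpha(x),\mu(y,z))=\mu(\mu(x,y),\alpha(z))$ is precisely the statement that the Hom-associator $j_\mu^\alpha(\mu)$ vanishes; hence $[\mu,\mu]_\alpha^\Delta=2\,j_\mu^\alpha(\mu)=0$. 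By Remark \ref{rem} (the graded-Jacobi consequence spelled out there) applied in the graded Lie algebra $\big(C_\alpha(\A,\A),[.,.]_\alpha^\Delta\big)$ established in the preceding theorem, the map $\phi\mapsto[\mu,\phi]_\alpha^\Delta$ squares to zero on each $C_\alpha^p(\A,\A)$. This settles that $D_\mu^\alpha$ is a differential operator, conditional only on the identification of $D_\mu^\alpha$ with $-\delta_{Hom}^n$ up to a shift in degree, which also shows the coboundary space lands where it should.

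The core computation is to expand $D_\mu^\alpha\phi=[\mu,\phi]_\alpha^\Delta=j_\mu^\alpha(\phi)-(-1)^{1\cdot(n-1)}j_\phi^\alpha(\mu)$ for $\phi\in C_\alpha^{n-1}(\A,\A)$, i.e. $\phi$ an $n$-linear multiplicative map, and match it term-by-term against formula \eqref{HomCohomo} for $\delta_{Hom}^n\phi$. Writing out $j_\phi^\alpha(\mu)(x_0,\dots,x_n)$ from the definition with $a=n-1$, $b=1$: the index $k$ runs from $0$ to $1$, contributing $\phi\big(\mu(x_0,\dots)\dots\big)$-type... — here I must be careful: with $a=n-1$, $b=1$, the map $j_\phi^\alpha(\mu)$ inserts $\phi$ (the $a$-cochain, arity $n$) into $\mu$ (the $b$-cochain, arity $2$), giving $\sum_{k=0}^{1}(-1)^{(n-1)k}\mu\big(\alpha^{n-1}(x_0),\dots,\phi(x_k,\dots,x_{k+n-1}),\dots\big)$, which is exactly $\mu(\phi(x_0,\dots,x_{n-1}),\alpha^{n-1}(x_n))+(-1)^{n-1}\mu(\alpha^{n-1}(x_0),\phi(x_1,\dots,x_n))$. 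Meanwhile $j_\mu^\alpha(\phi)(x_0,\dots,x_n)=\sum_{k=0}^{n-1}(-1)^k\,\phi\big(\alpha(x_0),\dots,\alpha(x_{k-1}),\mu(x_k,x_{k+1}),\alpha(x_{k+2}),\dots,\alpha(x_n)\big)$, the middle summation in \eqref{HomCohomo} (after re-indexing $k\mapsto k-1$, which produces the $(-1)^k$ with $k$ from $1$ to $n$ and the sign convention matching). Collecting with the overall sign $-(-1)^{n-1}=(-1)^n$ in front of $j_\phi^\alpha(\mu)$ and comparing against $-\delta_{Hom}^n\phi$ then pins down all the signs; the multiplicativity condition \eqref{MultipliMultiLinearMap} for $\mu$ and $\phi$ is what guarantees $D_\mu^\alpha\phi$ again lies in $C_\alpha^n(\A,\A)$, so the statement is consistent.

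The main obstacle I anticipate is bookkeeping: matching the three blocks of $\delta_{Hom}^n$ (the leading $\mu(\alpha^{n-1}(x_0),\phi(\dots))$ term, the alternating sum of $\phi$-with-$\mu$-inserted terms, and the trailing $(-1)^{n+1}\mu(\phi(\dots),\alpha^{n-1}(x_n))$ term) against the two pieces $j_\mu^\alpha(\phi)$ and $(-1)^n j_\phi^\alpha(\mu)$ requires carefully tracking the index shift in the inner summation and verifying that the sign $(-1)^k$ appearing in \eqref{HomCohomo} with $k=1,\dots,n$ is reproduced by $(-1)^{k'}$ with $k'=0,\dots,n-1$ from $j_\mu^\alpha(\phi)$ together with the global sign. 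Once the degree-$(n-1)$ identification $D_\mu^\alpha\phi=-\delta_{Hom}^n\phi$ is in hand, the relation $\delta_{Hom}^{n+1}\circ\delta_{Hom}^n=0$ of Proposition \ref{prop f0} drops out as the promised corollary of $[\mu,\mu]_\alpha^\Delta=0$ via Remark \ref{rem}, and likewise $H^{*}_{D}(\A,\A)=H^{*+1}_{Hom}(\A,\A)$ follows from the degree shift by $1$ between the grading of $C_\alpha(\A,\A)$ and that of $C_{Hom}^*(\A,\A)$.
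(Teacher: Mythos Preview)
Your proposal is correct and follows essentially the same approach as the paper: you expand $[\mu,\phi]_\alpha^\Delta=j_\mu^\alpha(\phi)-(-1)^{n-1}j_\phi^\alpha(\mu)$, write out both insertions explicitly, and match them term-by-term (after the reindexing $k\mapsto k-1$) against the three blocks of \eqref{HomCohomo}, exactly as the paper does. The only organizational difference is that you fold the argument for $(D_\mu^\alpha)^2=0$ (via $[\mu,\mu]_\alpha^\Delta=0$ and Remark~\ref{rem}) into the same proof, whereas the paper states that separately as Proposition~\ref{P2}.
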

\begin{proof}
Let  $\phi \in C_\alpha^{n-1}(\A,\A)$ and $x_0,...,x_n\in\A$,
\begin{align*}
D_\mu^\alpha\phi(x_0,...,x_n)&=[\mu,\phi]^\Delta(x_0,...,x_n)=\big(j_\mu^\alpha(\phi)-(-1)^{n-1}j_\phi^\alpha(\mu)\big)(x_0,...,x_n)\\
& =\sum_{k=0}^{n-1}(-1)^{k}
\phi\big(\alpha(x_0),...,\alpha(x_{k-1}),\mu(x_k,x_{k+1}),\alpha(x_{k+2}),...,\alpha(x_n)\big)\\
& -(-1)^{n-1}\mu(\phi(x_0,...,x_{n-1}),\alpha^{n-1}(x_n))-(-1)^{n-1}
(-1)^{n-1}\mu\big(\alpha^{n-1}(x_0),\phi(x_1,...,x_{n})\big)\\
& =-\big(\mu\big(\alpha^{n-1}(x_0),\phi(x_1,...,x_{n})\big)+\sum_{k=1}^{n}(-1)^{k}
\phi\big(\alpha(x_0),...,\alpha(x_{k-2}),\mu(x_{k-1},x_{k}),\\ & \alpha(x_{k+1}),...,\alpha(x_n)\big)+(-1)^{n+1}\mu(\phi(x_0,...,x_{n-1}),\alpha^{n-1}(x_n))\big)\\ & =-
\delta_{Hom}^n(\phi)
\end{align*}
\end{proof}
Let $(\A,\mu,\alpha)$ be a Hom-algebra, it is easy to see that
$[\mu,\mu]_\alpha^\Delta=0$ if and only if $(\A,\mu,\alpha)$ is a
Hom-associative algebra.\\Indeed, let $x,y,z \in \A$
\begin{align*}
[\mu,\mu]_\alpha^\Delta\big(x,y,z\big)& =\big(j_\mu^\alpha\mu-(-1)^1j_\mu^\alpha\mu \big)(x,y,z)=2j_\mu^\alpha\mu(x,y,z)\\ \  & =2\big(\mu(\mu(x,y),\alpha(z))-\mu(\alpha(x),\mu(y,z))\big).
\end{align*}
Henceforth, if we use the remark (\ref{rem}) we obtain the following proposition:
\begin{prop}\label{P2}
The differential operator $ D_\mu^\alpha : C_\alpha(\A,\A)\rightarrow C_\alpha(\A,\A)$ satisfies
$(D_\mu^\alpha)^2=0.$
\end{prop}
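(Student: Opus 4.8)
The plan is to obtain Proposition~\ref{P2} as a purely formal consequence of the graded Lie algebra structure on $C_\alpha(\A,\A)$ established in the Theorem, combined with Remark~\ref{rem}. First I would record that, since $\A$ is a multiplicative Hom-associative algebra, its multiplication $\mu$ is a bilinear (equivalently, $2$-linear) map satisfying $\alpha\circ\mu=\mu\circ(\alpha\otimes\alpha)$, so that $\mu\in C_\alpha^1(\A,\A)$; in other words $\mu$ is a degree-one element of the $\Z$-graded Lie algebra $\big(C_\alpha(\A,\A),[.,.]_\alpha^\Delta\big)$.

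The second step is the identity displayed just before the statement, namely
$$[\mu,\mu]_\alpha^\Delta(x,y,z)=2\big(\mu(\mu(x,y),\alpha(z))-\mu(\alpha(x),\mu(y,z))\big)\qquad(x,y,z\in\A),$$
whose right-hand side is twice the Hom-associator of $\mu$ and therefore vanishes identically because $\A$ is Hom-associative. Hence $\mu$ is a degree-one element with $[\mu,\mu]_\alpha^\Delta=0$.

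Finally I would invoke Remark~\ref{rem} with $\pi=\mu$: in any $\Z$-graded Lie algebra the graded Jacobi identity \eqref{gradedJacobi} yields, for $x\in C_\alpha^p(\A,\A)$,
$$\big[[\mu,\mu]_\alpha^\Delta,x\big]_\alpha^\Delta=2\big[\mu,[\mu,x]_\alpha^\Delta\big]_\alpha^\Delta=2\,D_\mu^\alpha\big(D_\mu^\alpha(x)\big),$$
so that the vanishing of $[\mu,\mu]_\alpha^\Delta$ forces $D_\mu^\alpha\circ D_\mu^\alpha=0$ on each homogeneous component, hence on all of $C_\alpha(\A,\A)$. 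Since everything reduces to the graded Lie algebra axioms already proved, there is essentially no obstacle here; the one point deserving a line of care is the specialization of the sign $(-1)^{pq}$ in \eqref{gradedJacobi} when two of its three arguments are taken equal to the degree-one element $\mu$, which is exactly the elementary computation recorded in Remark~\ref{rem}. I would avoid the alternative route through Proposition~\ref{P1} (identifying $D_\mu^\alpha$ with $-\delta_{Hom}$) and Proposition~\ref{prop f0}, since that would be circular given the remark following Proposition~\ref{prop f0}; the argument via Remark~\ref{rem} is self-contained.
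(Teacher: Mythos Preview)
Your proposal is correct and follows exactly the same approach as the paper: the paper records, just before the statement, that $[\mu,\mu]_\alpha^\Delta$ equals twice the Hom-associator and hence vanishes, and then simply says ``if we use the remark (\ref{rem}) we obtain the following proposition,'' which is precisely the argument you spell out. Your additional remark about avoiding circularity through Propositions~\ref{P1} and~\ref{prop f0} is apt, since the paper indeed uses Propositions~\ref{P1} and~\ref{P2} to give the alternative proof of Proposition~\ref{prop f0}.
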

\begin{rem}
The proof of the fundamental proposition (\ref{prop f0}) is a direct
consequence of the propositions (\ref{P1}) and (\ref{P2}).
\end{rem}
We denote the corresponding space of $(n+1)-$cocycles for the coboundary operator $ D_\mu^\alpha$ by $$Z_{D}^n(\A,\A)=\{\varphi\in C_\alpha^n(\A,\A): \
D_\mu^\alpha\varphi=0\},$$
and the space of $(n+1)-$couboundaries by
$$B_{D}^n(\A,\A)=\{D_\mu^\alpha\varphi : \  \varphi\in C_\alpha^{n-1}(\A,\A)\}.$$
Hence the corresponding cohomology is given by
$$H_{D}^n(\A,\A)=\frac{Z_{D}^n(\A,\A)}{B_{D}^n(\A,\A)}.$$
\begin{rem}
The relationship with the cohomology $H_{Hom}^{*}(\A,\A)$ introduced  above is
  $$B_{D}^n(\A,\A)=B_{Hom}^{n+1}(\A,\A),\quad  Z_{D}^n(\A,\A)=Z_{Hom}^{n+1}(\A,\A) \quad \hbox{and } \; H_{D}^n(\A,\A)=H_{Hom}^{n+1}(\A,\A).$$
\end{rem}
\subsection{The algebra $\tilde{C}_\alpha(\LX,\LX)$ }\label{secCTilde}
Let $A$ be a vector space and $\alpha : A \rightarrow A$ be a linear
map. We denote by $\tilde{C}_\alpha^n(A,A)$ the space of all
$(n+1)$-alternating linear maps $\varphi : A^{\times
(n+1)}\rightarrow A$  satisfying for
all $x_0,...,x_{n} \in A$
$$\alpha
(\varphi(x_0,...,x_{n}))=\varphi(\alpha(x_0),...,\alpha(x_{n})),$$ and set\\
$$\tilde{C}_\alpha(A,A)=\bigoplus_{n\geq-1}\tilde{C}_\alpha^n(A,A).$$

We define the alternator $\lambda : C_\alpha(A,A)\rightarrow C_\alpha(A,A)$ by $$(\lambda\varphi)(x_0,...,x_a)=\frac{1}{(a+1)!}\sum_{\sigma\in \mathcal{S}_{a+1}}\varepsilon(\sigma)\varphi(x_{\sigma(0)},...,x_{\sigma(a)})\;\hbox{for}\; \varphi \in C_\alpha^a(A,A).$$
where $\mathcal{S}_{a+1}$ is the permutation group and $\varepsilon(\sigma)$ is the signature of the permutation $\sigma$.
\begin{rem}
The set $\tilde{C}_\alpha(A,A)$ may be viewed as images by $\lambda$ of the elements of $C_\alpha(A,A).$
\end{rem}
\begin{lem}
The alternator $\lambda : C_\alpha(A,A)\rightarrow C_\alpha(A,A)$ satisfies $\lambda^2=\lambda,$
and we have $$\lambda\big(j_{\lambda(\varphi)}^\alpha\lambda(\psi)\big)=\lambda(j_\varphi^\alpha\psi)\; \hbox{for each}\; \varphi,\psi \in C_\alpha(A,A).$$
\end{lem}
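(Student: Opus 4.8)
My plan is to prove the two assertions separately: the first is the standard projection property of the alternator, and for the second I would reduce to two ``one-sided'' identities in which $\lambda$ is pushed past $j^{\alpha}$ in one argument at a time. First note that $\lambda$ really does map $C_\alpha(A,A)$ into itself, since the constraint \eqref{MultipliMultiLinearMap} is linear and stable under permuting arguments ($\alpha$ acting slotwise). Reindexing the averaging sum gives $(\lambda\varphi)(x_{\rho(0)},\dots,x_{\rho(a)})=\varepsilon(\rho)(\lambda\varphi)(x_0,\dots,x_a)$ for every $\rho\in\mathcal S_{a+1}$, i.e.\ $\lambda\varphi$ is alternating; substituting this into the definition of $\lambda(\lambda\varphi)$ and using $\frac{1}{(a+1)!}\sum_{\rho\in\mathcal S_{a+1}}\varepsilon(\rho)^2=1$ yields $\lambda^2\varphi=\lambda\varphi$. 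In particular $\tilde C_\alpha(A,A)$ is exactly the image, equivalently the fixed-point set, of $\lambda$.

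For the identity $\lambda\big(j^\alpha_{\lambda\varphi}(\lambda\psi)\big)=\lambda\big(j^\alpha_\varphi\psi\big)$ I would establish, for all $\varphi,\psi\in C_\alpha(A,A)$, the two statements (i) $\lambda\big(j^\alpha_{\lambda\varphi}\psi\big)=\lambda\big(j^\alpha_\varphi\psi\big)$ and (ii) $\lambda\big(j^\alpha_\varphi(\lambda\psi)\big)=\lambda\big(j^\alpha_\varphi\psi\big)$; the lemma then follows by applying (i) with $\psi$ replaced by $\lambda\psi$ and then (ii). Statement (i) is the easy side: in the $k$-th summand of $j^\alpha_{\lambda\varphi}\psi$ the map $\varphi$ receives the genuine arguments $x_k,\dots,x_{k+a}$ (with no $\alpha$'s), so an internal permutation $\rho\in\mathcal S_{a+1}$ of $\varphi$'s slots extends to a permutation $\tilde\rho_k\in\mathcal S_{a+b+1}$ of $(x_0,\dots,x_{a+b})$ supported on the block $\{k,\dots,k+a\}$, with $\varepsilon(\tilde\rho_k)=\varepsilon(\rho)$. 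Absorbing $\tilde\rho_k$ into the outer summation variable of $\lambda$ — a bijection of $\mathcal S_{a+b+1}$ for each fixed $k$ and $\rho$ — collapses the $\rho$-average and returns $\lambda(j^\alpha_\varphi\psi)$. Since only block-internal permutations occur, no block-transposition signs intervene and the $\alpha$-powers on the spectator slots are untouched.

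Statement (ii) is the heart of the argument. I would write
\[
j^\alpha_\varphi\chi=\sum_{m=0}^{b}(-1)^{am}M^\chi_m,\qquad M^\chi_m(z_0,\dots,z_{a+b})=\chi\big(\alpha^a z_0,\dots,\alpha^a z_{m-1},\varphi(z_m,\dots,z_{m+a}),\alpha^a z_{m+a+1},\dots,\alpha^a z_{a+b}\big).
\]
When $\chi=\lambda\psi$ is alternating, moving the $\varphi$-block to the first slot of $\lambda\psi$ costs a sign $(-1)^m$ and reindexing the $z$'s to restore the standard spectator order costs the block-transposition sign $(-1)^{(a+1)m}$; since $(-1)^{m+(a+1)m}=(-1)^{am}$, this gives $\lambda(M^{\lambda\psi}_m)=(-1)^{am}\lambda(M^{\lambda\psi}_0)$, whence $\lambda\big(j^\alpha_\varphi(\lambda\psi)\big)=(b+1)\lambda(M^{\lambda\psi}_0)$. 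It remains to identify $(b+1)\lambda(M^{\lambda\psi}_0)$ with $\lambda(j^\alpha_\varphi\psi)$: I would expand the inner $\lambda\psi$ of $M^{\lambda\psi}_0$ as an average over $\tau\in\mathcal S_{b+1}$, observe that such a $\tau$ — permuting the $b+1$ slots of $\psi$, one of which carries the arity-$(a+1)$ value $\varphi(\cdot)$ while the rest are singletons — inflates to a definite $q_\tau\in\mathcal S_{a+b+1}$ acting on the outer variable by $\sigma'\mapsto\sigma'q_\tau$, and verify the combinatorial identity $\varepsilon(q_\tau)\varepsilon(\tau)=(-1)^{am}$ whenever $\tau^{-1}(0)=m$. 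Applying the outer $\lambda$ and reorganizing the double sum into cosets then turns the $\tau$-average into $\frac{1}{b+1}\sum_{m}(-1)^{am}\lambda(M^\psi_m)=\frac{1}{b+1}\lambda(j^\alpha_\varphi\psi)$, which completes (ii) and hence the lemma.

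The step I expect to be the real obstacle is precisely this sign bookkeeping in (ii) — the elementary identity $(-1)^{m+(a+1)m}=(-1)^{am}$ and, above all, $\varepsilon(q_\tau)\varepsilon(\tau)=(-1)^{am}$ for the inflation $q_\tau$ of a slot permutation that keeps the $\varphi$-slot in position $m$. These are exactly the combinatorial facts underlying the classical Gerstenhaber and Nijenhuis--Richardson brackets \cite{Gerst def,Lecomte}; the ``Hom'' refinement adds nothing substantive, because in every manipulation above the twisting powers $\alpha^a,\alpha^b$ occur only on spectator arguments, commute with permutations of arguments, and intertwine $\varphi$ and $\psi$ through \eqref{MultipliMultiLinearMap}, so they can be carried passively along and the classical computation applies essentially verbatim.
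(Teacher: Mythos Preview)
Your proposal is correct and is precisely the argument the paper is gesturing at: the paper's own proof is the single sentence ``The proof is similar to the classical case $(\alpha=id)$,'' and what you have written is exactly that classical Nijenhuis--Richardson computation spelled out, together with the observation (which you make explicit at the end) that the powers $\alpha^a$ sit only on spectator slots and are carried passively through every permutation via \eqref{MultipliMultiLinearMap}. There is no difference in approach to report.
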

\begin{proof}
The proof is similar to the classical case $(\alpha=id)$.
\end{proof}
We define an  operator and a  bracket for $\varphi \in C_\alpha^a(A,A)$ and  $\psi\in C_\alpha^b(A,A)$ by $$i_\varphi^\alpha(\psi):= \frac{(a+b+1)!}{(a+1)!(b+1)!}\lambda(j_\varphi^\alpha\psi),$$
$$[\varphi,\psi]_\alpha^\wedge:=\frac{(a+b+1)!}{(a+1)!(b+1)!}\lambda([\varphi,\psi]_\alpha^\Delta)=
i_\varphi^\alpha(\psi)-(-1)^{ab}i_\psi^\alpha(\varphi).$$
Thus $i_{\varphi}^{\alpha}(\psi) \in \tilde{C}_\alpha^{a+b+1}.$

The bracket $[.,.]_\alpha^\wedge$ is called Nijenhuis-Richardson bracket.
\begin{thm}
The pair $(C_\alpha(A,A),[.,.]_\alpha^\wedge)$ is a graded Lie algebra.

In particular, $(\tilde{C}_\alpha(A,A),[.,.]_\alpha^\wedge)$ is a graded Lie algebra.
\end{thm}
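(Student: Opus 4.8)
The plan is to deduce the theorem from the corresponding result for the Gerstenhaber bracket already established above, namely that $\big(C_\alpha(A,A),[.,.]_\alpha^\Delta\big)$ is a graded Lie algebra, together with the two properties of the alternator $\lambda$ recorded in the preceding lemma: $\lambda^2=\lambda$ and $\lambda\big(j_{\lambda(\varphi)}^\alpha\lambda(\psi)\big)=\lambda\big(j_\varphi^\alpha\psi\big)$. The idempotent $\lambda$ plays the role of a projection onto the alternating subspace, so the strategy is exactly the classical one for transporting a graded Lie structure along an idempotent that is compatible with the bracket.

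First I would verify the grading: if $\varphi\in C_\alpha^a(A,A)$ and $\psi\in C_\alpha^b(A,A)$, then $j_\varphi^\alpha\psi\in C_\alpha^{a+b+1}(A,A)$ by the definition of $j^\alpha$, hence $\lambda(j_\varphi^\alpha\psi)\in C_\alpha^{a+b+1}(A,A)$, and so $[\varphi,\psi]_\alpha^\wedge\in C_\alpha^{a+b+1}(A,A)$; this gives $[C_\alpha^a,C_\alpha^b]_\alpha^\wedge\subset C_\alpha^{a+b+1}$, which is the correct degree for the grading convention $C_\alpha(A,A)=\bigoplus_{n\geq-1}C_\alpha^n(A,A)$. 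Next, graded skew-symmetry of $[.,.]_\alpha^\wedge$ is immediate from $[\varphi,\psi]_\alpha^\wedge=i_\varphi^\alpha(\psi)-(-1)^{ab}i_\psi^\alpha(\varphi)$, which is visibly antisymmetric up to the sign $(-1)^{ab}$; alternatively one applies $\lambda$ (a linear map) to the already-known identity $[\varphi,\psi]_\alpha^\Delta=-(-1)^{ab}[\psi,\varphi]_\alpha^\Delta$ and multiplies by the combinatorial factor, which is symmetric in $a$ and $b$.

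The substantive step is the graded Jacobi identity for $[.,.]_\alpha^\wedge$. Here I would first observe, using $\lambda\big(j_{\lambda(\varphi)}^\alpha\lambda(\psi)\big)=\lambda\big(j_\varphi^\alpha\psi\big)$ and linearity of $\lambda$, that $\lambda\big([\lambda(\varphi),\lambda(\psi)]_\alpha^\Delta\big)=\lambda\big([\varphi,\psi]_\alpha^\Delta\big)$, and more importantly that the operator $\varphi\mapsto\lambda(\varphi)$ is a morphism from $\big(C_\alpha(A,A),[.,.]_\alpha^\Delta\big)$ onto $\big(\tilde C_\alpha(A,A),[.,.]_\alpha^\wedge\big)$ after absorbing the scalar coefficients; concretely, for $\varphi\in C_\alpha^a,\psi\in C_\alpha^b$ one has $[\lambda\varphi,\lambda\psi]_\alpha^\wedge=\lambda\big([\varphi,\psi]_\alpha^\wedge\big)$ up to checking that the combinatorial prefactors $\tfrac{(a+b+1)!}{(a+1)!(b+1)!}$ pass through consistently. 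Since every element of $\tilde C_\alpha(A,A)$ is of the form $\lambda(\varphi)$ and $[.,.]_\alpha^\wedge$ restricted there agrees with the image of $[.,.]_\alpha^\Delta$, the graded Jacobi identity for $[.,.]_\alpha^\wedge$ follows by applying $\lambda$ to the graded Jacobi identity for $[.,.]_\alpha^\Delta$ proved in the previous theorem. For the ``in particular'' clause one notes that $\tilde C_\alpha(A,A)=\lambda\big(C_\alpha(A,A)\big)$ is closed under $[.,.]_\alpha^\wedge$ (as $i_\varphi^\alpha(\psi)$ already lands in $\tilde C_\alpha^{a+b+1}$), so it is a graded Lie subalgebra of $\big(C_\alpha(A,A),[.,.]_\alpha^\wedge\big)$.

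The main obstacle I anticipate is the bookkeeping of the binomial coefficients: one must check that $\tfrac{(a+b+1)!}{(a+1)!(b+1)!}$ behaves multiplicatively under nested brackets, i.e. that iterating $i^\alpha$ twice produces the coefficient $\tfrac{(a+b+c+1)!}{(a+1)!(b+1)!(c+1)!}$ needed to match the triple bracket, and that this is exactly what makes the Jacobi terms cancel after $\lambda$ is applied. This is the one place where a short computation is unavoidable, but it is formally identical to the classical $\alpha=\mathrm{id}$ case treated in \cite{Gerst def,Lecomte}, so I would either carry it out directly or invoke that the identity $\lambda\big(j_{\lambda(\varphi)}^\alpha\lambda(\psi)\big)=\lambda\big(j_\varphi^\alpha\psi\big)$ is precisely the structural fact that guarantees the transport of the graded Lie structure, exactly as in the untwisted setting.
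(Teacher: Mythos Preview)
Your proposal is correct and follows essentially the same route as the paper: the paper's proof also reduces the graded Jacobi identity for $[.,.]_\alpha^\wedge$ to that of $[.,.]_\alpha^\Delta$ by using $\lambda\big([\lambda(\varphi),\lambda(\psi)]_\alpha^\Delta\big)=\lambda\big([\varphi,\psi]_\alpha^\Delta\big)$ and $\lambda^2=\lambda$, after checking that the nested combinatorial coefficients combine to the symmetric factor $\tfrac{(a+b+c+1)!}{(a+1)!(b+1)!(c+1)!}$. The paper presents this in two lines rather than via the ``morphism'' language you use, but the argument is the same.
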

\begin{proof}
Let  $\varphi \in C_\alpha^a(A,A), \; \psi\in C_\alpha^b(A,A)\; \hbox{and}\; \phi \in C_\alpha^c(A,A)$ $$\circlearrowleft_{\varphi,\psi,\phi}(-1)^{ac}\big[\varphi,[\psi,\phi]_\alpha^\wedge\big]_\alpha^\wedge
=\frac{(a+b+c+1)!}{(a+1)!(b+1)!(c+1)!}\circlearrowleft_{\varphi,\psi,\phi}
\lambda(\big[\varphi,\lambda([\psi,\phi]_\alpha^\Delta)\big]_\alpha^\Delta).$$
Notice that, $$\lambda\big([\varphi,\psi]_\alpha^\Delta\big)= \lambda\big([\lambda(\varphi),\lambda(\psi)]_\alpha^\Delta\big)\;\hbox{and}\;\lambda^2=\lambda.$$
Then,
\begin{align*}
\circlearrowleft_{\varphi,\psi,\phi}(-1)^{ac}\big[\varphi,[\psi,\phi]_\alpha^\wedge\big]_\alpha^\wedge &=\frac{(a+b+c+1)!}{(a+1)!(b+1)!(c+1)!}\lambda \big(\circlearrowleft_{\varphi,\psi,\phi}\big[\varphi,[\psi,\phi]_\alpha^\Delta\big]_\alpha^\Delta\big)\\ \ & =0.
\end{align*}
\end{proof}
The following lemma is a generalization to twisted case of a result in \cite{Lecomte}.
\begin{lem}\label{lem}
Let $\varphi \in C_\alpha^a(A,A), \; \psi\in C_\alpha^b(A,A)$. Then $$i_{\varphi}^{\alpha}(\psi)(x_0,...,x_{b+a})=\frac{1}{b!(a+1)!}\sum_{\sigma\in S_{a+b+1}}\varepsilon(\sigma)\psi\big(\varphi(x_{\sigma(0)},...,x_{\sigma(a)}),\alpha^a(x_{\sigma(a+1)}), ...,\alpha^a(x_{\sigma(a+b)})\big)$$
\end{lem}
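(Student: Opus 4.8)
The statement to be proved is the explicit symmetrized formula for $i_\varphi^\alpha(\psi)$. The natural approach is to start from the definition
$$i_\varphi^\alpha(\psi)=\frac{(a+b+1)!}{(a+1)!(b+1)!}\,\lambda\!\left(j_\varphi^\alpha\psi\right),$$
write out $j_\varphi^\alpha\psi$ according to its definition as a sum over $k=0,\dots,b$ of terms in which $\varphi$ is inserted into $\psi$ at position $k$ (with the other arguments hit by $\alpha^a$), then apply the alternator $\lambda$, which prepends a full sum $\tfrac{1}{(a+b+1)!}\sum_{\sigma\in\mathcal{S}_{a+b+1}}\varepsilon(\sigma)$ over permutations of the $a+b+1$ arguments. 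The key observation is that, after antisymmetrizing, all $b+1$ summands in $j_\varphi^\alpha\psi$ become equal: the summand with $\varphi$ inserted at slot $k$ is carried to the summand with $\varphi$ at slot $0$ by a suitable permutation of the arguments, and one checks that this permutation contributes a sign that is absorbed when one reindexes the sum over $\mathcal{S}_{a+b+1}$. Since $\psi$ itself is already alternating, the term with $\varphi$ in slot $0$ can be taken in the form $\psi\big(\varphi(x_{\sigma(0)},\dots,x_{\sigma(a)}),\alpha^a(x_{\sigma(a+1)}),\dots,\alpha^a(x_{\sigma(a+b)})\big)$.

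Concretely, I would proceed as follows. First, expand
$$i_\varphi^\alpha(\psi)(x_0,\dots,x_{a+b})=\frac{1}{(a+1)!(b+1)!}\sum_{\sigma\in\mathcal{S}_{a+b+1}}\varepsilon(\sigma)\sum_{k=0}^{b}(-1)^{ak}\psi\big(\alpha^a(x_{\sigma(0)}),\dots,\varphi(x_{\sigma(k)},\dots,x_{\sigma(k+a)}),\dots,\alpha^a(x_{\sigma(a+b)})\big),$$
using the multiplicativity relation \eqref{MultipliMultiLinearMap} for $\varphi$, which lets one pull $\alpha^a$ out so that $\varphi(x_{\sigma(k)},\dots,x_{\sigma(k+a)})$ appears rather than $\varphi$ of $\alpha^a$-images (this matches the target formula). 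Second, for each fixed $k$, introduce the cyclic-type permutation $\tau_k\in\mathcal{S}_{a+b+1}$ that moves the block of positions $\{k,\dots,k+a\}$ to the front $\{0,\dots,a\}$ while preserving the order within that block and the order of the remaining positions; replacing $\sigma$ by $\sigma\circ\tau_k^{-1}$ in the inner sum turns the $k$-th summand into the $0$-th summand up to the sign $\varepsilon(\tau_k)$. Third, verify the bookkeeping: $\varepsilon(\tau_k)=(-1)^{ak}$ (moving the length-$(a+1)$ block past $k$ earlier arguments costs $ak$ transpositions), so this cancels the $(-1)^{ak}$ in front, and all $b+1$ terms collapse to the single term with $\varphi$ in slot $0$. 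This yields
$$i_\varphi^\alpha(\psi)(x_0,\dots,x_{a+b})=\frac{b+1}{(a+1)!(b+1)!}\sum_{\sigma\in\mathcal{S}_{a+b+1}}\varepsilon(\sigma)\,\psi\big(\varphi(x_{\sigma(0)},\dots,x_{\sigma(a)}),\alpha^a(x_{\sigma(a+1)}),\dots,\alpha^a(x_{\sigma(a+b)})\big),$$
and $\dfrac{b+1}{(a+1)!(b+1)!}=\dfrac{1}{b!\,(a+1)!}$ gives exactly the claimed identity.

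**Main obstacle.** The only genuinely delicate point is the sign bookkeeping in the third step: one must confirm that $\varepsilon(\tau_k)=(-1)^{ak}$ exactly cancels the factor $(-1)^{ak}$ coming from the definition of $j_\varphi^\alpha$, with no leftover sign depending on $b$ or on the reindexing $\sigma\mapsto\sigma\circ\tau_k^{-1}$ (note $\varepsilon(\sigma\circ\tau_k^{-1})=\varepsilon(\sigma)\varepsilon(\tau_k)$, so the running over all of $\mathcal{S}_{a+b+1}$ is unaffected). Since the paper explicitly says the proof is "similar to the classical case" treated in \cite{Lecomte}, I would present this cancellation carefully but briefly, and then simply note that summing the $b+1$ identical contributions and simplifying the factorials produces the stated formula.
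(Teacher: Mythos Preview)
The paper states this lemma without proof, merely noting that it generalizes a classical formula from \cite{Lecomte}, so there is no argument in the paper to compare against. Your strategy is the standard one and is correct in outline: expand the definition, then show that after full antisymmetrization the $b+1$ summands of $j_\varphi^\alpha\psi$ coincide, producing the factor $b+1$ that turns $\tfrac{1}{(a+1)!(b+1)!}$ into $\tfrac{1}{(a+1)!\,b!}$.

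There is, however, a genuine error in your sign bookkeeping, and it conceals a conceptual point. Reindexing $\sigma\mapsto\sigma\circ\tau_k^{-1}$ does \emph{not} by itself carry the $k$-th summand to the $0$-th: after the substitution the inputs to $\varphi$ become $x_{\sigma'(0)},\dots,x_{\sigma'(a)}$, but $\varphi$ still occupies slot $k$ of $\psi$. To move it to slot $0$ you must use that $\psi$ is \emph{alternating}, which contributes an additional sign $(-1)^{k}$. Separately, moving a block of length $a+1$ past $k$ positions costs $(a+1)k$ transpositions, so $\varepsilon(\tau_k)=(-1)^{(a+1)k}$, not $(-1)^{ak}$ as you wrote. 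The correct cancellation is
\[
(-1)^{ak}\cdot(-1)^{k}\cdot(-1)^{(a+1)k}=(-1)^{2(a+1)k}=1,
\]
and your two mistakes happen to compensate. Relatedly, the lemma as printed in the paper (with $\psi\in C_\alpha^b$ rather than $\tilde C_\alpha^b$) is actually false for non-alternating $\psi$; the alternating hypothesis on $\psi$ is essential and should be made explicit in your write-up.
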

\begin{prop}\label{P3}
Let $(\LX,[.,.],\alpha)$ be a Hom-Lie algebra,
 the linear map $ D_{[.,.]}^\alpha : \tilde{C}_\alpha(\LX,\LX)
\rightarrow \tilde{C}_\alpha(\LX,\LX)$ is defined  by
$$D_{[.,.]}^\alpha(\phi)=\big[[.,.],\phi\big]_\alpha^\wedge \;
\hbox{for all}\; \phi\in \tilde{C}_\alpha(\LX,\LX).$$
 Therefore $ D_{[.,.]}^\alpha $ is a differential operator, and for $\phi \in \tilde{C}_\alpha^{n-1}(\LX,\LX)$ we have
$D_{[.,.]}^\alpha(\phi)=\delta_{HL}^n(\phi).$
\end{prop}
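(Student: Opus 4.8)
The plan is to reduce both assertions to machinery already in place. \emph{For the differential property}, I would first record the exact twisted analogue of the fact established just before Proposition~\ref{P2}: applying Lemma~\ref{lem} with $\varphi=\psi=[.,.]\in\tilde{C}_\alpha^{1}(\LX,\LX)$ one computes
$$\big[[.,.],[.,.]\big]_\alpha^\wedge(x_0,x_1,x_2)=2\,i_{[.,.]}^\alpha([.,.])(x_0,x_1,x_2)=-2\circlearrowleft_{x_0,x_1,x_2}\big[\alpha(x_0),[x_1,x_2]\big],$$
so that $\big[[.,.],[.,.]\big]_\alpha^\wedge=0$ is precisely the Hom-Jacobi identity. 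Hence $\pi:=[.,.]$ is an element of degree $1$ in the graded Lie algebra $(\tilde{C}_\alpha(\LX,\LX),[.,.]_\alpha^\wedge)$ (the preceding Theorem) with $[\pi,\pi]_\alpha^\wedge=0$, and Remark~\ref{rem} applies verbatim, giving $D_{[.,.]}^\alpha\circ D_{[.,.]}^\alpha=0$ on every $\tilde{C}_\alpha^{n}(\LX,\LX)$.

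\emph{For the identification} $D_{[.,.]}^\alpha(\phi)=\delta_{HL}^n(\phi)$ with $\phi\in\tilde{C}_\alpha^{n-1}(\LX,\LX)$, I would expand
$$D_{[.,.]}^\alpha(\phi)=\big[[.,.],\phi\big]_\alpha^\wedge=i_{[.,.]}^\alpha(\phi)-(-1)^{n-1}\,i_\phi^\alpha([.,.])$$
and evaluate both terms through the explicit formula of Lemma~\ref{lem}. In $i_{[.,.]}^\alpha(\phi)$ one sums over $\sigma\in\mathcal{S}_{n+1}$ with the bracket occupying the first two arguments of $\phi$; I would group the permutations according to the unordered pair $\{i,j\}$ sent to those two slots, use skew-symmetry of $[.,.]$ (so the two orderings of the pair agree) and the alternating property of $\phi$ (so the $(n-1)!$ orderings of the remaining arguments agree), note that the resulting multiplicity is cancelled by the factorial normalisation in the definition of $i$, and thereby collapse this term to the sum $\sum_{0\le i<j\le n}\phi\big([x_i,x_j],\alpha(x_0),\dots,\widehat{x_i},\dots,\widehat{x_j},\dots,\alpha(x_n)\big)$ of \eqref{HLcohomo}, with signs the signatures of the shuffles carrying $x_i,x_j$ to the front. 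In $i_\phi^\alpha([.,.])$ the roles are reversed: $\phi$ fills the first $n$ arguments and $[.,.]$ joins $\phi(\dots)$ to the last, $\alpha^{n-1}$-twisted argument; grouping now by the index $k$ sent to that last slot, using again that $\phi$ is alternating, cancelling the $n!$-multiplicity against the normalisation, and rewriting $[\phi(\dots),\alpha^{n-1}(x_k)]=-[\alpha^{n-1}(x_k),\phi(\dots)]$, this term becomes, after combining with the factor $-(-1)^{n-1}$, the sum $\sum_{k=0}^{n}(-1)^k\big[\alpha^{n-1}(x_k),\phi(x_0,\dots,\widehat{x_k},\dots,x_n)\big]$ of \eqref{HLcohomo}. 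Adding the two contributions yields $\delta_{HL}^n(\phi)$; that $D_{[.,.]}^\alpha(\phi)$ is again multiplicative needs no separate verification, being built into the statement that $i_\varphi^\alpha(\psi)\in\tilde{C}_\alpha^{a+b+1}$.

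The one genuinely technical point, and the step I expect to be the main obstacle, is the sign bookkeeping in this second part: one must compute the signature of the shuffle that moves a chosen pair $\{i,j\}$ (respectively a chosen index $k$) into the distinguished slot(s) while keeping the remaining arguments in increasing order, and check that these signatures, together with the binomial-type normalisations $\tfrac{(a+b+1)!}{(a+1)!(b+1)!}$, reproduce exactly the signs occurring in \eqref{HLcohomo}. Once this is settled, I would also observe — in parallel with the remark following Proposition~\ref{P2} — that $D_{[.,.]}^\alpha\circ D_{[.,.]}^\alpha=0$ together with $D_{[.,.]}^\alpha(\phi)=\delta_{HL}^n(\phi)$ furnishes a conceptual proof of Proposition~\ref{prop f1}, as announced there.
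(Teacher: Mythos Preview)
Your proposal is correct and follows the paper's approach: the paper's proof of this proposition is the single sentence ``The proof is obtained using Lemma~(\ref{lem}) and straightforward calculation,'' and your proposal spells out precisely that calculation, expanding $\big[[.,.],\phi\big]_\alpha^\wedge=i_{[.,.]}^\alpha(\phi)-(-1)^{n-1}i_\phi^\alpha([.,.])$ via Lemma~\ref{lem} and grouping the permutations to recover the two sums in \eqref{HLcohomo}. Your treatment of the square-zero property slightly anticipates the paper, which places the computation $\big[[.,.],[.,.]\big]_\alpha^\wedge=0$ and the conclusion $(D_{[.,.]}^\alpha)^2=0$ immediately \emph{after} this proposition (as Proposition~\ref{P4}), but the argument---Hom-Jacobi identity plus Remark~\ref{rem}---is identical.
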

\begin{proof}
The proof is obtained using  Lemma (\ref{lem}) and  straightforward calculation.
\end{proof}
A Hom-algebra $(\LX,[.,.],\alpha)$
is Hom-Lie algebra  if and only if
$\big[[.,.],[.,.]\big]_\alpha^\wedge=0.$

Indeed, let $x,y,z \in \LX$
\begin{align*}
\big[[.,.],[.,.]\big]_\alpha^\wedge\big(x,y,z\big)&=\big(i_{[.,.]}^\alpha[.,.]-(-1)^1i_{[.,.]}^\alpha[.,.]\big)(x,y,z)
\\ \ & =2i_{[.,.]}^\alpha[.,.](x,y,z)\\ \ & = 2\big( \circlearrowleft_{x,y,z}[[x,y],\alpha(z)]\big).
\end{align*}
Thus, using the remark (\ref{rem}) we have the following proposition:
\begin{prop}\label{P4}
The differential operator $ D_{[.,.]}^\alpha : C(\LX,\LX)\rightarrow C_\alpha(\LX,\LX)$ satisfies
$(D_{[.,.]}^\alpha)^2=0.$
\end{prop}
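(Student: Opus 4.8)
The plan is to deduce $(D_{[.,.]}^\alpha)^2=0$ from the general mechanism already established in Remark \ref{rem}, exactly in the way Proposition \ref{P2} was obtained for the Hom-associative case. Recall that by the theorem preceding Lemma \ref{lem}, the pair $(C_\alpha(\LX,\LX),[.,.]_\alpha^\wedge)$ is a graded Lie algebra, where the bracket $[.,.]_\alpha^\wedge$ carries the obvious $\Z$-grading with $\tilde C_\alpha^n(\LX,\LX)$ placed in degree $n$. In particular, since $[.,.]$ lies in $\tilde C_\alpha^1(\LX,\LX)$, it is an element of degree $1$. Remark \ref{rem} then applies verbatim once we know that this degree-$1$ element squares to zero under the bracket.

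First I would record the computation that $\big[[.,.],[.,.]\big]_\alpha^\wedge=0$, which is displayed just before the statement of the proposition: expanding the Nijenhuis--Richardson bracket of $[.,.]$ with itself and using graded skew-symmetry of $[.,.]\in\tilde C_\alpha^1(\LX,\LX)$ one gets $\big[[.,.],[.,.]\big]_\alpha^\wedge(x,y,z)=2\,i_{[.,.]}^\alpha[.,.](x,y,z)=2\circlearrowleft_{x,y,z}[[x,y],\alpha(z)]$, which vanishes precisely because $(\LX,[.,.],\alpha)$ satisfies the Hom-Jacobi identity. Hence $\pi:=[.,.]$ is an element of degree $1$ in the graded Lie algebra $(C_\alpha(\LX,\LX),[.,.]_\alpha^\wedge)$ with $[\pi,\pi]_\alpha^\wedge=0$.

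Next I would invoke Remark \ref{rem}: for any graded Lie algebra, a degree-$1$ element $\pi$ with $[\pi,\pi]=0$ gives a coboundary map $\delta_\pi(x)=[\pi,x]$, since the graded Jacobi identity yields $0=[[\pi,\pi],x]=2[\pi,[\pi,x]]=2\,\delta_\pi(\delta_\pi(x))$. Applying this with $\pi=[.,.]$ and $\delta_\pi=D_{[.,.]}^\alpha$ gives $2(D_{[.,.]}^\alpha)^2=0$, and since $\mathrm{char}\,\K=0$ we conclude $(D_{[.,.]}^\alpha)^2=0$. Finally, via Proposition \ref{P3}, $D_{[.,.]}^\alpha$ restricted to $\tilde C_\alpha^{n-1}(\LX,\LX)$ equals $\delta_{HL}^n$, so this also reproves Proposition \ref{prop f1}.

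The only place where care is genuinely needed — and thus the main (if modest) obstacle — is the self-bracket computation $\big[[.,.],[.,.]\big]_\alpha^\wedge=0$: one must check that the alternator and the combinatorial factor $\tfrac{(a+b+1)!}{(a+1)!(b+1)!}$ in the definition of $i_{[.,.]}^\alpha$ conspire to produce exactly the cyclic sum $\circlearrowleft_{x,y,z}[[x,y],\alpha(z)]$ and not some other multiple or permutation of it; here Lemma \ref{lem} is the right tool, since it gives the explicit formula for $i_\varphi^\alpha(\psi)$ as a signed sum over $S_{a+b+1}$, and specializing to $a=b=1$, $\varphi=\psi=[.,.]$ makes the identification with the Hom-Jacobian transparent. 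Everything after that is purely formal and borrowed from Remark \ref{rem}.
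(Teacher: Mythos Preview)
Your proposal is correct and follows exactly the paper's approach: the paper proves Proposition~\ref{P4} simply by invoking Remark~\ref{rem} together with the preceding computation that $\big[[.,.],[.,.]\big]_\alpha^\wedge=0$ is equivalent to the Hom-Jacobi identity. Your write-up is in fact more detailed than the paper's, which just says ``Thus, using the remark (\ref{rem}) we have the following proposition'' and gives no further argument.
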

\begin{rem}
The proof of the fundamental Proposition (\ref{prop f1}) is a direct
consequence of the propositions (\ref{P3}) and (\ref{P4}).
\end{rem}

 We denote the corresponding space of  $(n+1)$-cocycles for the coboundary operator $D_{[.,.]}^\alpha $ by
 $$\tilde{Z}_D^n(\LX,\LX)=\{\varphi\in C_\alpha^n(\LX,\LX):\ D_{[.,.]}^\alpha\varphi=0\},$$
 the space of $(n+1)$-coboundaries by  $$\tilde{B}_D^n(\LX,\LX)=\{D_{[.,.]}^\alpha\varphi:\  \varphi\in \tilde{C}^{n-1}(\LX,\LX)\}$$
  and the corresponding cohomology group by $$\tilde{H}_D^n(\LX,\LX)=\frac{\tilde{Z}_D^n(\LX,\LX)}{\tilde{B}_D^n(\LX,\LX)}.$$
\begin{rem}The relationship with the cohomology $H_{HL}^{*}(\LX,\LX)$ introduced  above is
$$\tilde{B}_D^n(\LX,\LX)=B_{HL}^{n+1}(\LX,\LX),\quad  \tilde{Z}_D^n(\LX,\LX)=Z_{HL}^{n+1}(\LX,\LX)\; \quad \hbox{and} \; \tilde{H}_D^n(\LX,\LX)=H_{HL}^{n+1}(\LX,\LX).$$
\end{rem}

\section{One-parameter formal deformations}
The one-parameter formal deformation of Hom-associative algebras and Hom-Lie algebras were introduced in \cite{Makhl Silv Not}. In this section we review the results and study, in terms of cohomology,  the problem of extending a formal deformation of order $k-1$ to a deformation of order $k$. we consider multiplicative Hom-associative algebras and multiplicative Hom-Lie algebras.

Let $\K[[t]]$ be the power series
 ring in one variable $t$ and coefficients in $\K$ and $A[[t]]$ be the set of formal series whose coefficients are elements of the vector space $A$,($A[[t]]$ is obtained
  by extending the coefficients domain of $A$ from $\K$ to $\K[[t]]$), Given a $\K$-bilinear map $\varphi : A \times A\rightarrow A$, it admits naturally
  an extension to a $\K[[t]]$-bilinear map $\varphi : A[[t]]\times A[[t]]\rightarrow A[[t]]$, that is, if $x=\sum_{i\geq0}a_it^i$ and $y=\sum_{j\geq0}y_jt^j$
   then $\varphi(x,y)=\sum_{i\geq0,j\geq0}t^{i+j}\varphi(a_i,b_j)$. The same holds for linear maps.

\subsection{Deformation of Hom-associative algebras}
\begin{defn}
Let $(\A,\mu,\alpha)$    be a  Hom-associative algebra.
A formal  deformation of the  Hom-associative algebra $\A$ is given by  a $\K[[t]]$-bilinear map  $$\mu_t : \A[[t]]\times \A[[t]]\longrightarrow \A[[t]]$$ of the form
$\mu_t=\sum_{i\geq0}t^i \mu_i$ where each $\mu_i$ is a $\K$-bilinear-map  $\mu_i : \A\times \A\rightarrow \A$ (extended to be $\K[[t]]$-bilinear), and $\mu_0=\mu$
 such that for $x, y, z\in \A$ the following condition
 \begin{equation}\label{def ass}
 \mu_t(\mu_t(x,y),\alpha(z))=\mu_t(\alpha(x),\mu_t(y,z))
 \end{equation}

The deformation is said to be of order $k$ if $\mu_t=\sum_{i\geq0}^{k}t^i \mu_i$.
\end{defn}
The identity (\ref{def ass}) is called  deformation equation of the Hom-associative algebra
and  may be written $$\sum_{i\geq0,j\geq0}t^{i+j}\big(\mu_i(\mu_j(x,y),\alpha(z))-\mu_i(\alpha(x),\mu_j(y,z))\big)=0,$$
or $$\sum_{s\geq0}t^{s}\sum_{i\geq0}\big(\mu_i(\mu_{s-i}(x,y),\alpha(z))-\mu_i(\alpha(x),\mu_{s-i}(y,z))\big)=0,$$
which  is equivalent to the following infinite system of equations
$$\sum_{i\geq0}\big(\mu_i(\mu_{s-i}(x,y),\alpha(z))-\mu_i(\alpha(x),\mu_{s-i}(y,z))\big)=0,\;\; \;\hbox{for}\; s=0,1,2,...$$
i.e.
\begin{equation}\label{defo} \sum_{i\geq0}\mu_i\circ_\alpha\mu_{s-i}=0,\; \;\hbox{for}\; s=0,1,2,...\end{equation}

 In particular,\\For $s=0$, we have $ \mu_0\circ_\alpha\mu_0=0$ which corresponds to the Hom-associativity of $\A$.\\
 For $s=1$ we have $\; \mu_0\circ_\alpha\mu_1+\mu_1\circ_\alpha\mu_0=0$ which is equivalent to $\delta_{Hom}^2\mu_1=0$ $(i.e.D(\mu_1)=[\mu,\mu_1]_\alpha^\Delta=0)$.  It turns out that   $\mu_1$ is always a 2-cocycle.

 For $s\geq2$, the identity  \eqref{defo} is equivalent to : $$\delta_{Hom}^2\mu_s=-\sum_{p+q=s}\mu_p\circ_\alpha\mu_q=\frac{1}{2}\sum_{p+q=s,p>0,q>0} [\mu_p,\mu_q]_\alpha^\Delta,$$
 where, $\mu_p\circ_\alpha\mu_q=j_{\mu_q}^\alpha\mu_p $   (see Section \ref{sec3} for the definitions of $j_{\mu_q}^\alpha\mu_p$ and $[.,.]_\alpha^\Delta$).

 \begin{defn}Let $(\A,\mu,\alpha)$ be a Hom-associative algebra.
Given two deformations  $\A_t=(\A,\mu_t,\alpha)$ and  $\A'_{t}=(\A,\mu_t^{'},\alpha)$ of  $\A$ where $\mu_t=\sum_{i\geq0}t^i \mu_i$ and $\mu'_t=\sum_{i\geq0}t^i \mu'_i$
with $\mu_0=\mu$, $\mu'_0=\mu$. We say  that $\A_t$ and $\A'_t$  are equivalents if there exists a formal automorphism $(\phi_t)_{t\geq0} : \A[[t]]\rightarrow \A[[t]]$
that may be written in the form $\phi_t=\sum_{i\geq0}\phi_i t^i$ where $\phi_i\in End(\A)$ and $\phi_0= id$ such that \begin{eqnarray}\label{defo 1}
\phi_t(\mu_t(x,y))&=&\mu_t^{'}(\phi_t(x),\phi_t(y)) \quad \hbox{for} \;\;x, y \in \A[[t]],
 \\ \label{defo 2} \phi(\alpha(x))&=&\alpha(\phi(x))\end{eqnarray}
A deformation $\A_t$ of $\A$ is said to be trivial if and only if $\A_t$ is equivalent to $\A$ (viewed as an algebra over $\A[[t]]$.)
\end{defn}
The identity (\ref{defo 1}) may be written :
for all $x, y \in \A $
$$\sum_{i\geq0,j\geq0,}t^{i+j}\big(\phi_i(\mu_j(x, y))-\sum_{i\geq0,j\geq0,k\geq0}t^{i+j+k}\mu_j(\phi_i(x), \phi_k(y))\big)=0.$$
i.e.
$$\sum_{i\geq0,s\geq0}t^{s}\big(\phi_i(\mu_{s-i}(x, y))\big)-\sum_{i\geq0,j\geq0,s\geq0}t^{s}\big(\mu_j(\phi_i(x), \phi_{s-i-j}(y))\big)=0.$$
Then
$$\sum_{i\geq0}\big(\phi_i(\mu_{s-i}(x, y))-\sum_{j\geq0}\mu_j(\phi_i(x), \phi_{s-i-j}(y))\big)=0 \;\hbox{for}\; s=0,1,2,... $$
In particular, for $s=0$ we have $\mu_0=\mu'_0,$
and  for  $s=1$ $$\phi_0(\mu_1(x,y))+\phi_1(\mu_0(x, y))=\mu'_0(\phi_0(x), \phi_{1}(y))+\mu'_0(\phi_1(x), \phi_0(y))\mu'_1(\phi_0(x), \phi_{0}(y)).$$
Since   $\phi_0=id$    then
\begin{equation}\label{integrability1}\mu'_1(x, y)=\mu_1(x, y)+\phi_1(\mu_0(x, y))-\mu'_0(x, \phi_{1}(y))-\mu'_0(\phi_1(x), y).
\end{equation}
Therefore two  2-cocycles corresponding  to two equivalent deformations
are cohomologous.

\begin{defn}Let $(\A,\mu,\alpha)$ be a Hom-associative algebra, and $\mu_1$ be an element of $Z_{Hom}^2(\A,\A)$,
 the 2-cocycle $\mu_1$ is said integrable if there exists a family  $(\mu_t)_{t\geq0}$ such that $\mu_t=\sum_{i\geq0}t^i\mu_i$ defines a formal deformation $\A_t=(\A[[t]],\mu_t,\alpha)$ of $\A$.
\end{defn}
According to identity \ref{integrability1}, the integrability of $\mu_{1}$ depends only
on its cohomology class.
Thus, we get the following:
\begin{thm}
Let $(\A,\mu,\alpha)$ be a Hom-associative algebra and $\A_t=(\A[[t]],\mu_t,\alpha)$ be a one-parameter formal deformation of $\A$, where $\mu_t=\sum_{i\geq0}t^i\mu_i$. Then there exists an equivalent deformation $\A'_t=(\A[[t]],\mu'_t,\alpha)$, where $\mu'_t=\sum_{i\geq0}t^i\mu'_i$ such that $\mu'_1\in Z_{Hom}^2(\A,\A)$ and $\mu'_1$ does not belong to $ B_{Hom}^2(\A,\A).$

Hence, If  $H_{Hom}^2(\A,\A)= 0$ then every formal deformation  is equivalent to a trivial deformation.
\end{thm}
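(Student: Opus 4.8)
The theorem has two assertions: (1) any formal deformation $\A_t$ is equivalent to one whose linear term $\mu'_1$ is either zero or a non-coboundary 2-cocycle; and (2) if $H^2_{Hom}(\A,\A)=0$ then every formal deformation is trivial. I would prove (1) first, then deduce (2) by an iteration on the order of the deformation.

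\textbf{Step 1: Normalizing the linear term.} Start from $\A_t=(\A[[t]],\mu_t,\alpha)$ with $\mu_t=\sum_{i\geq 0}t^i\mu_i$, $\mu_0=\mu$. As already recorded in the excerpt, the order-1 deformation equation forces $\delta^2_{Hom}\mu_1=0$, so $\mu_1\in Z^2_{Hom}(\A,\A)$. If $\mu_1\in B^2_{Hom}(\A,\A)$, write $\mu_1=\delta^1_{Hom}f_1$ for some $f_1\in C^1_{Hom}(\A,\A)$; note $f_1$ is a morphism commuting with $\alpha$, i.e.\ $\alpha\circ f_1=f_1\circ\alpha$ and more precisely $\alpha(f_1(x))=f_1(\alpha(x))$. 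Set $\phi_t=\id - t f_1$ (or $\phi_t=\exp(-tf_1)$; either works), which is a formal automorphism with $\phi_0=\id$ and which satisfies $\phi_t\circ\alpha=\alpha\circ\phi_t$ because $f_1$ does. Define $\mu'_t$ by transport of structure, $\mu'_t(x,y)=\phi_t\big(\mu_t(\phi_t^{-1}(x),\phi_t^{-1}(y))\big)$; then $\A'_t$ is by construction equivalent to $\A_t$, still has $\mu'_0=\mu$, and a direct computation of the coefficient of $t$ — using \eqref{integrability1} with this particular $\phi_1=-f_1$ — gives $\mu'_1=\mu_1-\delta^1_{Hom}f_1=0$. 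Thus if $\mu_1$ was a coboundary we may assume $\mu_1=0$; otherwise $\mu_1$ is already a genuine 2-cocycle not in $B^2_{Hom}$, and we leave $\A_t$ unchanged. This proves assertion (1).

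\textbf{Step 2: Killing higher terms when $H^2=0$.} Now suppose $H^2_{Hom}(\A,\A)=0$. By Step 1 we may assume $\mu_1=0$, so $\mu_t=\mu+t^{p}\mu_p+O(t^{p+1})$ with $p\geq 2$ and $\mu_p\neq 0$ (if no such $p$ exists the deformation is already trivial). Extract the coefficient of $t^{p}$ in the deformation equation \eqref{defo}: all the cross terms $\mu_i\circ_\alpha\mu_{p-i}$ with $0<i<p$ vanish because $\mu_1=\dots=\mu_{p-1}=0$, leaving $\mu_0\circ_\alpha\mu_p+\mu_p\circ_\alpha\mu_0=0$, i.e.\ $\delta^2_{Hom}\mu_p=0$. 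Since $H^2_{Hom}(\A,\A)=0$, $\mu_p=\delta^1_{Hom}f_p$ for some $f_p\in C^1_{Hom}(\A,\A)$. Apply the automorphism $\phi_t=\id - t^{p-1}f_p$ (valid since $f_p$ commutes with $\alpha$, so \eqref{defo 2} holds); transport of structure yields an equivalent deformation $\tilde\mu_t=\mu+t^{p+1}\tilde\mu_{p+1}+O(t^{p+2})$ whose lowest nontrivial correction now sits in degree $p+1$, because the degree-$p$ term becomes $\mu_p-\delta^1_{Hom}f_p=0$. Iterating, we push the first nonzero correction to arbitrarily high order; the composite of all these automorphisms converges in the $t$-adic topology to a formal automorphism $\Phi_t=\sum_i\Phi_i t^i$ with $\Phi_0=\id$, $\Phi_t\circ\alpha=\alpha\circ\Phi_t$, carrying $\A_t$ to the trivial deformation $(\A[[t]],\mu,\alpha)$.

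\textbf{Expected main obstacle.} The genuinely delicate point is Step 2's induction, specifically verifying that after transporting structure by $\phi_t=\id-t^{p-1}f_p$ the degree-$p$ coefficient of the new multiplication really is $\mu_p-\delta^1_{Hom}f_p$ and that \emph{all} lower-degree coefficients are unchanged (still equal to those of $\mu$); this requires carefully tracking how $\phi_t^{-1}=\id+t^{p-1}f_p+O(t^{2p-2})$ interacts with the twist $\alpha$ in the Hom-associativity constraint, and it is exactly here that multiplicativity $\alpha\circ\mu=\mu\circ(\alpha\otimes\alpha)$ and $\alpha\circ f_p=f_p\circ\alpha$ are used. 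A secondary bookkeeping issue is the convergence of the infinite product of automorphisms, which is routine since the $(p-1)$-th factor is $\id$ modulo $t^{p-1}$ and $p\to\infty$, so the product stabilizes in each fixed degree. Everything else is the formal-deformation-theory boilerplate already set up in the excerpt, so I would cite \eqref{defo}, \eqref{integrability1}, \eqref{defo 1} and \eqref{defo 2} rather than re-derive them.
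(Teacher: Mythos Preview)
Your overall strategy is the standard Gerstenhaber argument and is exactly what the paper relies on; note that the paper itself offers no proof beyond the identity \eqref{integrability1} and the sentence immediately preceding the theorem, so your write-up is in fact more detailed than the original. The approach is correct, but two index/sign slips break the computations as literally written.

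In Step~1, taking $\phi_t=\id-tf_1$ gives $\phi_1=-f_1$, and \eqref{integrability1} reads $\mu'_1=\mu_1-\delta^1_{Hom}\phi_1=\mu_1+\delta^1_{Hom}f_1=2\mu_1$, not $0$. You need $\phi_t=\id+tf_1$ (equivalently $\phi_1=f_1$) to obtain $\mu'_1=\mu_1-\delta^1_{Hom}f_1=0$.

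In Step~2 the error is more damaging to the induction: the automorphism must be $\phi_t=\id+t^{p}f_p$, not $\id-t^{p-1}f_p$. With exponent $p-1$, transporting the structure produces at order $t^{p-1}$ the new term
\[
-f_p\big(\mu(x,y)\big)+\mu\big(f_p(x),y\big)+\mu\big(x,f_p(y)\big)=\delta^1_{Hom}f_p(x,y)=\mu_p(x,y),
\]
so instead of pushing the first nonzero correction from degree $p$ to $p+1$ you have pulled it down to degree $p-1$, destroying the hypothesis $\mu_1=\cdots=\mu_{p-1}=0$. With the correct exponent $p$, orders $<p$ are untouched, the order-$p$ term becomes $\mu_p-\delta^1_{Hom}f_p=0$, and your convergence argument goes through because the $p$-th factor is now $\id$ modulo $t^{p}$. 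Once these two fixes are made the proof is complete; the multiplicativity conditions $\alpha\circ f_p=f_p\circ\alpha$ and $\alpha\circ\mu=\mu\circ(\alpha\otimes\alpha)$ are precisely what guarantee \eqref{defo 2} and that each $\mu'_i$ again lies in $C^2_{Hom}(\A,\A)$.
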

 Hom-associative algebras for which every formal deformation is equivalent to a trivial deformation are said to be analytically rigid. The nullity of the second cohomology group  ($H_{Hom}^2(\A,\A)= 0$) gives a sufficient criterion for rigidity.

In the following we assume that  $H_{Hom}^2(\A,\A)\neq0$, then one may obtain nontrivial one-parameter formal deformations. We consider the problem of extending a one parameter formal deformation of order $k-1$ to a deformation of order $k$.
\begin{thm}
Let $(\A,\mu,\alpha)$ be a Hom-associative algebra and $\A_t=(\A[[t]],\mu_t,\alpha)$ be an order $k-1$ one-parameter formal deformation of $\A$, where $\mu_t=\sum_{i\geq0}^{k-1}t^i\mu_i$.

Then  $\psi(\mu_1,...,\mu_{k-1})=\frac{1}{2}\sum_{p+q=k-1,p>0,q>0} [\mu_p,\mu_q]_\alpha^\Delta\in Z_{Hom}^3(\A,\A)$ ( i.e. $\psi\in Z_D^2(\A,\A)$).

 Therefore the deformation  extends to a deformation of order $k$ if and only if   $\psi(\mu_1,...,\mu_{k})$ is a coboundary.
\end{thm}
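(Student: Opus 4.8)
The plan is to read the truncated deformation as a Maurer--Cartan element modulo $t^k$ in the graded Lie algebra $\bigl(C_\alpha(\A,\A),[\cdot,\cdot]_\alpha^\Delta\bigr)$ constructed in Section \ref{sec3}, and then to compare coefficients of powers of $t$. Put $\bar\mu_t=\sum_{i=0}^{k-1}t^i\mu_i$ (extended $\K[[t]]$-bilinearly, and of degree one in the extended graded Lie algebra $C_\alpha(\A,\A)[[t]]$), and recall that $\mu_p\circ_\alpha\mu_q=j_{\mu_q}^\alpha\mu_p$, so that for cochains of degree one $[\mu_p,\mu_q]_\alpha^\Delta=j_{\mu_p}^\alpha\mu_q+j_{\mu_q}^\alpha\mu_p=\mu_p\circ_\alpha\mu_q+\mu_q\circ_\alpha\mu_p$. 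First I would observe that the deformation equations \eqref{defo} for $s=0,\dots,k-1$ are exactly equivalent to $[\bar\mu_t,\bar\mu_t]_\alpha^\Delta\equiv 0\pmod{t^k}$, since the coefficient of $t^s$ in $[\bar\mu_t,\bar\mu_t]_\alpha^\Delta$ equals $\sum_{p+q=s}[\mu_p,\mu_q]_\alpha^\Delta=2\sum_{p+q=s}\mu_p\circ_\alpha\mu_q$ when $s\le k-1$. Reading off the coefficient of $t^k$ --- for which the summands with $p=0$ or $q=0$ are absent, since $p+q=k$ together with $p,q\le k-1$ forces $p,q\ge 1$ --- gives $[\bar\mu_t,\bar\mu_t]_\alpha^\Delta\equiv 2\psi\,t^k\pmod{t^{k+1}}$, where $\psi=\frac12\sum_{p+q=k,\,p,q>0}[\mu_p,\mu_q]_\alpha^\Delta$ is exactly the obstruction cochain of the statement (it depends only on $\mu_1,\dots,\mu_{k-1}$). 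Since each summand $[\mu_p,\mu_q]_\alpha^\Delta$ is the Gerstenhaber bracket of two elements of $C_\alpha^1(\A,\A)$, it lies in $C_\alpha^2(\A,\A)=C_{Hom}^3(\A,\A)$; hence $\psi\in C_{Hom}^3(\A,\A)$ with no further argument.

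To prove that $\psi$ is a cocycle, that is $\psi\in Z_{Hom}^3(\A,\A)=Z_D^2(\A,\A)$, I would invoke the graded Jacobi identity \eqref{gradedJacobi}: applied to three copies of the degree-one element $\bar\mu_t$ it yields $3\,[\bar\mu_t,[\bar\mu_t,\bar\mu_t]_\alpha^\Delta]_\alpha^\Delta=0$, hence $[\bar\mu_t,[\bar\mu_t,\bar\mu_t]_\alpha^\Delta]_\alpha^\Delta=0$ because $\K$ has characteristic $0$. Expanding this identity with $\bar\mu_t\equiv\mu\pmod{t}$ and $[\bar\mu_t,\bar\mu_t]_\alpha^\Delta\equiv 2\psi\,t^k\pmod{t^{k+1}}$ and comparing the coefficients of $t^k$ gives $2\,[\mu,\psi]_\alpha^\Delta=0$, i.e. $D_\mu^\alpha\psi=[\mu,\psi]_\alpha^\Delta=0$. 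By Proposition \ref{P1}, applied with $\psi\in C_\alpha^2(\A,\A)$ (so $n=3$ there), this is equivalent to $\delta_{Hom}^3\psi=0$; thus $\psi\in Z_{Hom}^3(\A,\A)$, and through the identification $Z_D^2(\A,\A)=Z_{Hom}^3(\A,\A)$ also $\psi\in Z_D^2(\A,\A)$.

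For the extension criterion I would simply spell out what the coefficient of $t^k$ in the deformation equation demands of $\mu_k$, and compare it with the coboundary condition. To extend the order-$(k-1)$ deformation to order $k$ is to find a multiplicative bilinear map $\mu_k$ such that $\bar\mu_t+t^k\mu_k$ satisfies \eqref{def ass} modulo $t^{k+1}$; the equations \eqref{defo} for $s<k$ are automatic, as they do not involve $\mu_k$, and the one for $s=k$ reads $\sum_{p+q=k}\mu_p\circ_\alpha\mu_q=0$. Splitting off the two terms with $p=0$ and $q=0$ and using $\mu\circ_\alpha\mu_k+\mu_k\circ_\alpha\mu=[\mu,\mu_k]_\alpha^\Delta=D_\mu^\alpha\mu_k=-\delta_{Hom}^2\mu_k$ together with $\sum_{p+q=k,\,p,q>0}\mu_p\circ_\alpha\mu_q=\psi$, this equation becomes precisely $\delta_{Hom}^2\mu_k=\psi$. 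Hence an extension exists if and only if $\psi$ lies in the image of $\delta_{Hom}^2$ on $C_{Hom}^2(\A,\A)$, i.e. if and only if $\psi\in B_{Hom}^3(\A,\A)$ is a coboundary; for the \emph{if} direction one picks a $\mu_k$ solving $\delta_{Hom}^2\mu_k=\psi$ and runs the computation backwards. I expect the main obstacle to be purely bookkeeping: verifying that the twisted Gerstenhaber bracket of Section \ref{sec3} reproduces the deformation equation with the correct signs and the factor $2$, so that the $\circ_\alpha$-notation of \cite{Makhl Silv Not} matches $[\cdot,\cdot]_\alpha^\Delta$ exactly, and keeping track of the multiplicativity hypothesis $\alpha\circ\mu_i=\mu_i\circ(\alpha\otimes\alpha)$ on every $\mu_i$ (including $\mu_k$), which is what keeps all the brackets and all the $\delta_{Hom}$'s inside $C_\alpha(\A,\A)$; once these normalizations are settled, every step above is a routine power-series coefficient comparison.
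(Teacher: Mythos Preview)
Your proof is correct and takes a genuinely different route from the paper's. The paper proceeds by explicit cochain manipulations: it introduces a cup product $\varphi\smile\psi(x_0,\dots,x_{a+b+1})=\mu_0(\varphi(x_0,\dots,x_a),\psi(x_{a+1},\dots,x_{a+b+1}))$, establishes the identity $\delta_{Hom}^3(\mu_p\circ_\alpha\mu_q)=\delta_{Hom}^2\mu_p\circ_\alpha\mu_q-\mu_p\circ_\alpha\delta_{Hom}^2\mu_q-\mu_p\smile\mu_q+\mu_q\smile\mu_p$, observes that the cup-product terms cancel under the symmetric sum over $p+q=k$, substitutes the lower-order deformation equations for each $\delta_{Hom}^2\mu_p$, and then appeals to a pre-Lie--type associator identity $(\beta\circ_\alpha\varphi)\circ_\alpha\gamma-\beta\circ_\alpha(\varphi\circ_\alpha\gamma)=-(\beta\circ_\alpha\gamma)\circ_\alpha\varphi+\beta\circ_\alpha(\gamma\circ_\alpha\varphi)$ (verified by a direct four-variable computation) to conclude $\delta_{Hom}^3\psi=0$. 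Your argument instead packages everything into the Maurer--Cartan formalism: you read $\bar\mu_t$ as a degree-one element of $C_\alpha(\A,\A)[[t]]$ with $[\bar\mu_t,\bar\mu_t]_\alpha^\Delta\equiv 2\psi\,t^k\pmod{t^{k+1}}$, and extract the cocycle condition in one stroke from the graded Jacobi identity $[\bar\mu_t,[\bar\mu_t,\bar\mu_t]_\alpha^\Delta]_\alpha^\Delta=0$. Your approach is more conceptual and transparently generalizes to any graded Lie algebra controlling a deformation problem; the paper's approach, while more laborious, is self-contained at the cochain level and makes the underlying pre-Lie structure of $\circ_\alpha$ explicit. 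Both arguments arrive at the same extension equation $\delta_{Hom}^2\mu_k=\psi$ for the order-$k$ coefficient.
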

\begin{proof}
We start by defining the linear map $\smile :C(\A,\A) \times C(\A,\A)\rightarrow C(\A,\A)$ by   $$\varphi\smile\psi(x_0,...,x_{a+b})=\mu_0(\varphi(x_0,...,x_a),\psi(x_{a+1},...,x_{a+b+1})),$$ for $\varphi\in C^a(\A,\A),\psi\in C^b(\A,\A)\;\;\hbox{and for}\; x_0,...x_{a+b+1}\in \A.$
Then, $$ \delta_{Hom}^3(\mu_p\circ_\alpha\mu_q)=\delta_{Hom}^2\mu_p\circ_\alpha\mu_q-\mu_p\circ_\alpha\delta_{Hom}^2\mu_q-\mu_p\smile \mu_q+\mu_q\smile\mu_p$$
Notice that $$\sum_{p+q=k, p>0, q>0}\mu_q\smile \mu_p-\sum_{p+q=k, p>0, q>0}\mu_p\smile\mu_q=0$$
We have
\begin{align*}
\delta_{Hom}^3\big(\psi(\mu_1,...,\mu_{k})\big)&=\sum_{p+q=k, p>0, q>0}\big(\delta_{Hom}^2\mu_p\circ_\alpha\mu_q- \mu_p\circ_\alpha\delta_{Hom}^2\mu_q\big)
\\ \ &=\sum_{s+l+q=k, q>0, s>0, l>0} (\mu_s\circ_\alpha\mu_l)\circ_\alpha\mu_q-\sum_{s+l+p=k,p>0, s>0, l>0}\mu_p\circ_\alpha(\mu_l\circ_\alpha\mu_r)\\ \ &
=\sum_{s+l+r=k, r>0, s>0, l>0} (\mu_s\circ_\alpha\mu_l)\circ_\alpha\mu_r-\sum_{s+l+r=k,l>0, s>0, r>0}\mu_s\circ_\alpha(\mu_l\circ_\alpha\mu_r)
\end{align*}
Yet, for any $\beta, \varphi, \gamma \in C^1(\A,\A)$ $$(\beta\circ_\alpha\varphi)\circ_\alpha\gamma-\beta\circ_\alpha(\varphi\circ_\alpha\gamma)=
-(\beta\circ_\alpha\gamma)\circ_\alpha\varphi+\beta\circ_\alpha(\gamma\circ_\alpha\varphi)$$

Indeed, let be $x,y,z,t \in \A$
\begin{align*} (\beta\circ_\alpha\varphi)\circ_\alpha\gamma(x,y,z,t)-\beta\circ_\alpha(\varphi\circ_\alpha\gamma)(x,y,z,t) &= \beta(\gamma(\varphi(x,y),\alpha(z)),\alpha^2(t))-\beta(\gamma(\alpha(x),\varphi(y,z)),\alpha^2(t))\\
\ & +\beta(\alpha^2(x),\gamma(\varphi(y,z),\alpha(t)))-\beta(\alpha^2(x),\gamma(\alpha(t),\varphi(z,t)))\\
\ & -\beta(\gamma(\varphi(x,y),\alpha(z)),\alpha^2(t))+ \beta(\alpha(\varphi(x,y)),\gamma(\alpha(z),\alpha(t))\\
\ & +\beta(\gamma(\alpha(x),\varphi(y,z))\alpha^2(t))-\beta(\alpha^2(x),\gamma(\varphi(y,z),\alpha(t)))\\
\ & - \beta(\gamma(\alpha(x),\alpha(y)),\alpha(\varphi(z,t)))+\beta(\alpha^2(x),\gamma(\alpha(t),\varphi(z,t)))\\
\ &  =\beta(\alpha(\varphi(x,y)),\gamma(\alpha(z),\alpha(t)) - \beta(\gamma(\alpha(x),\alpha(y)),\alpha(\varphi(z,t))).
\end{align*}
Since $$\alpha(\gamma(x,y))=\gamma(\alpha(x),\alpha(y)), \;\; \alpha(\varphi(x,y))=\varphi(\alpha(x),\alpha(y)),$$
then
$$(\beta\circ_\alpha\varphi)\circ_\alpha\gamma(x,y,z,t)-\beta\circ_\alpha(\varphi\circ_\alpha\gamma)(x,y,z,t)=-(\beta\circ_\alpha\gamma)
\circ_\alpha\varphi(x,y,z,t)+\beta\circ_\alpha(\gamma\circ_\alpha\varphi)(x,y,z,t).$$
Thus, $$\delta_{Hom}^3\psi(\mu_1,...,\mu_{k})=0.$$
In the deformation equation corresponding to $\mu_t=\sum_{i\geq0}^{k}t^i\mu_i$ one has moreover the equation
$$\delta_{Hom}^2\mu_k=\psi(\mu_1,...,\mu_{k-1}).
$$
Hence, the $(k-1)$-order formal deformation extends to a $k$-order formal deformation
whenever $\psi$ is a coboundary. \end{proof}
\begin{cor}
If $H_{Hom}^3(\A,\A)=H_D^2(\A,\A)=0$, then any infinitesimal deformation can be extended to a formal deformation.
\end{cor}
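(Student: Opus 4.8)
The plan is to prove the statement by induction on the order of the deformation, using the obstruction Theorem that immediately precedes the corollary. Recall that an infinitesimal deformation of $(\A,\mu,\alpha)$ is, by convention, an order $1$ deformation $\mu_t=\mu+t\mu_1$; as observed right after the deformation equation \eqref{defo}, the term of degree $1$ in that equation is precisely $\delta_{Hom}^2\mu_1=0$, so prescribing an infinitesimal deformation is the same as choosing a class in $Z_{Hom}^2(\A,\A)$, and $\mu_t=\mu+t\mu_1$ then solves the deformation equation modulo $t^2$.

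Next I would run the inductive step. Assume that for some $k\geq 2$ we have constructed maps $\mu_0=\mu,\mu_1,\dots,\mu_{k-1}\in C_{Hom}^2(\A,\A)$ such that $\mu_t^{(k-1)}=\sum_{i=0}^{k-1}t^i\mu_i$ is an order $k-1$ deformation of $\A$. By the preceding Theorem, the obstruction $\psi(\mu_1,\dots,\mu_{k-1})=\frac{1}{2}\sum_{p+q=k-1,\,p>0,\,q>0}[\mu_p,\mu_q]_\alpha^\Delta$ lies in $Z_{Hom}^3(\A,\A)$. By hypothesis $H_{Hom}^3(\A,\A)=Z_{Hom}^3(\A,\A)/B_{Hom}^3(\A,\A)=0$ (the same condition as $H_D^2(\A,\A)=0$, the two groups being identified in the Remark above), so $\psi(\mu_1,\dots,\mu_{k-1})$ is a coboundary: there exists $\mu_k\in C_{Hom}^2(\A,\A)$ with $\delta_{Hom}^2\mu_k=\psi(\mu_1,\dots,\mu_{k-1})$. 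By the last assertion of the Theorem, adjoining this $\mu_k$ extends $\mu_t^{(k-1)}$ to an order $k$ deformation $\mu_t^{(k)}=\sum_{i=0}^{k}t^i\mu_i$.

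Iterating this construction over all $k\geq 2$ yields a sequence $(\mu_i)_{i\geq 0}$ in $C_{Hom}^2(\A,\A)$ for which every truncation $\sum_{i=0}^{s}t^i\mu_i$ satisfies the deformation equation modulo $t^{s+1}$; equivalently, the whole infinite system $\sum_{i\geq 0}\mu_i\circ_\alpha\mu_{s-i}=0$ holds for all $s\geq 0$. Hence $\mu_t=\sum_{i\geq 0}t^i\mu_i\in\A[[t]]$ is a formal deformation of $\A$ extending the prescribed infinitesimal one, which is the claim. There is no genuine obstacle here beyond keeping the induction straight: the one point deserving a word is that at each stage the primitive $\mu_k$ can be taken inside $C_{Hom}^2(\A,\A)$ rather than among arbitrary bilinear maps, but this is exactly what is encoded in the definition of $B_{Hom}^3(\A,\A)$, so multiplicativity of $\mu_k$ with respect to $\alpha$ is automatic and the construction goes through.
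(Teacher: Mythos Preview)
Your argument is correct and is exactly the standard inductive application of the preceding obstruction Theorem that the paper leaves implicit (the corollary is stated without proof). The one minor point is that you have faithfully copied the index $p+q=k-1$ from the Theorem's statement, whereas the relation $\delta_{Hom}^2\mu_k=\psi(\mu_1,\dots,\mu_{k-1})$ used in the Theorem's proof shows the intended sum is over $p+q=k$; this is a typo in the paper, not an error in your reasoning.
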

The connection to Hom-Poisson algebra has been shown in \cite{Makhl Silv Not}.

\begin{thm}[\cite{Makhl Silv Not}]
Let  $(\A_0,\mu_0,\alpha_0)$ be a commutative
Hom-associative algebra and
${\A_t}=(\A_0[[t]],\mu_t,\alpha_t)$ be a deformation of
$\A_0$. Consider the bracket  defined for $x,y\in
\A$ by $\{x ,y \}=\mu_1 (x,y)-\mu_1 (y,x)$ where
$\mu_1$ is the first order element of the
deformation $\mu_t$. Then $(\A,\mu_0,\{\cdot,\cdot\},\alpha_0)$ is a Hom-Poisson algebra.
\end{thm}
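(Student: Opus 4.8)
The plan is to verify the three conditions in the definition of a Hom-Poisson algebra for the quadruple $(\A,\mu_0,\{\cdot,\cdot\},\alpha_0)$. Condition~(1) is exactly the hypothesis. For condition~(2), skew-symmetry of $\{\cdot,\cdot\}$ is built into its definition, and $\{\cdot,\cdot\}$ is multiplicative with respect to $\alpha_0$ since $\mu_1$, the first-order coefficient of a deformation in the multiplicative setting, satisfies $\alpha_0\circ\mu_1=\mu_1\circ(\alpha_0\otimes\alpha_0)$, i.e. $\mu_1\in C_{Hom}^2(\A,\A)$; hence $\{\cdot,\cdot\}\in\tilde C_{\alpha_0}^1(\A,\A)$, and in fact $\{\cdot,\cdot\}=2\,\lambda(\mu_1)$. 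By the characterization recalled just before Proposition~\ref{P4}, $(\A,\{\cdot,\cdot\},\alpha_0)$ is a Hom-Lie algebra once $[\{\cdot,\cdot\},\{\cdot,\cdot\}]_{\alpha_0}^{\wedge}=0$. Since $[\cdot,\cdot]_{\alpha_0}^{\wedge}$ is, up to a nonzero scalar, $\lambda$ applied to $[\cdot,\cdot]_{\alpha_0}^{\Delta}$, and since $\lambda\big([\lambda\varphi,\lambda\psi]_{\alpha_0}^{\Delta}\big)=\lambda\big([\varphi,\psi]_{\alpha_0}^{\Delta}\big)$ by the alternator identity of Section~\ref{secCTilde}, the vanishing reduces to $\lambda\big([\mu_1,\mu_1]_{\alpha_0}^{\Delta}\big)=0$. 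But the order-two deformation equation says precisely that $\tfrac12[\mu_1,\mu_1]_{\alpha_0}^{\Delta}=\delta_{Hom}^2\mu_2\in B_{Hom}^3(\A,\A)$, so it suffices to know that the alternator annihilates this Hom-Hochschild coboundary.

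The crucial ingredient is therefore the following lemma: \emph{if $\mu_0$ is commutative, then $\lambda\big(\delta_{Hom}^2\psi\big)=0$ for every $\psi\in C_{Hom}^2(\A,\A)$.} I would prove this by expanding $\delta_{Hom}^2\psi$ via \eqref{HomCohomo} and antisymmetrizing over $\mathcal S_3$: the two inner terms $\psi(\mu_0(x_0,x_1),\alpha_0(x_2))$ and $\psi(\alpha_0(x_0),\mu_0(x_1,x_2))$ are killed by $\lambda$ because the symmetric argument $\mu_0(\cdot,\cdot)$ occupies two of the three slots, while the two outer terms $\mu_0(\alpha_0(x_0),\psi(x_1,x_2))$ and $\mu_0(\psi(x_0,x_1),\alpha_0(x_2))$ — after moving the $\alpha_0$-factor to the first position in the latter by commutativity of $\mu_0$ — are interchanged by the even relabelling $(x_0\,x_1\,x_2)$ and hence cancel. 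Applying the lemma to $\psi=\mu_2$ gives $\lambda\big([\mu_1,\mu_1]_{\alpha_0}^{\Delta}\big)=0$, which is the Hom-Jacobi identity for $\{\cdot,\cdot\}$. Equivalently, one may run this bookkeeping directly on the twelve summands of $\circlearrowleft_{x,y,z}\{\alpha_0(x),\{y,z\}\}$, which reorganize into $\pm\lambda(\mu_1\circ_\alpha\mu_1)$.

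For condition~(3) I would use the order-one deformation equation, which by Proposition~\ref{P1} is the cocycle identity $\delta_{Hom}^2\mu_1=0$, that is
\[
\mu_0(\alpha_0(x),\mu_1(y,z))-\mu_1(\mu_0(x,y),\alpha_0(z))+\mu_1(\alpha_0(x),\mu_0(y,z))-\mu_0(\mu_1(x,y),\alpha_0(z))=0 .
\]
Writing $\{\alpha_0(x),\mu_0(y,z)\}=\mu_1(\alpha_0(x),\mu_0(y,z))-\mu_1(\mu_0(y,z),\alpha_0(x))$ and inserting this identity for the argument triples $(x,y,z)$, $(y,z,x)$ and $(y,x,z)$ — using commutativity of $\mu_0$ both to identify arguments such as $\mu_0(x,y)=\mu_0(y,x)$ and to get $\mu_0(\alpha_0(x),\mu_1(y,z))=\mu_0(\mu_1(y,z),\alpha_0(x))$ — every term of the form $\mu_1(\mu_0(\cdot,\cdot),\alpha_0(\cdot))$ drops out, and the surviving terms collapse to $\mu_0(\alpha_0(y),\{x,z\})+\mu_0(\alpha_0(z),\{x,y\})$, which is exactly \eqref{CompatibiltyPoisson}. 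No separate twist-compatibility need be checked, $\alpha_0$ being the common twisting map.

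The main obstacle is the Hom-Jacobi step (the lemma above): one must carry the twist $\alpha_0$ through every summand while doing the sign-and-permutation bookkeeping for the $\mathcal S_3$-antisymmetrization, and correctly match $\circlearrowleft_{x,y,z}\{\alpha_0(x),\{y,z\}\}$ with the antisymmetrized Hom-associator $\mu_1\circ_\alpha\mu_1$. This is what makes the argument longer than the classical ($\alpha_0=\mathrm{id}$) one, although, as indicated, once it is funneled through the graded Lie algebra $\big(\tilde C_{\alpha_0}(\A,\A),[\cdot,\cdot]_{\alpha_0}^{\wedge}\big)$ together with the relation $[\mu_1,\mu_1]_{\alpha_0}^{\Delta}=2\,\delta_{Hom}^2\mu_2$, it becomes short.
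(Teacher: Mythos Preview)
The paper does not actually prove this theorem: it is quoted from \cite{Makhl Silv Not} and stated without proof, so there is no ``paper's own proof'' to compare against. That said, your argument is correct and is in fact nicely adapted to the machinery developed in \emph{this} paper. Your key move---writing $\{\cdot,\cdot\}=2\lambda(\mu_1)$, reducing Hom--Jacobi to $\lambda\big([\mu_1,\mu_1]_{\alpha_0}^{\Delta}\big)=0$ via the alternator identity $\lambda\big([\lambda\varphi,\lambda\psi]^{\Delta}_{\alpha_0}\big)=\lambda\big([\varphi,\psi]^{\Delta}_{\alpha_0}\big)$, and then invoking the order-two deformation equation $[\mu_1,\mu_1]_{\alpha_0}^{\Delta}=2\,\delta_{Hom}^2\mu_2$ together with the lemma $\lambda(\delta_{Hom}^2\psi)=0$ for commutative $\mu_0$---is a clean repackaging that exploits Sections~\ref{sec3} and~\ref{secCTilde}. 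The original source \cite{Makhl Silv Not} predates this graded-Lie-algebra framework and proceeds by a direct twelve-term expansion of $\circlearrowleft_{x,y,z}\{\alpha_0(x),\{y,z\}\}$ and a bare-hands verification of \eqref{CompatibiltyPoisson}; your route is shorter and more conceptual, at the cost of assuming the multiplicativity $\alpha_0\circ\mu_i=\mu_i\circ(\alpha_0\otimes\alpha_0)$ (which is the standing hypothesis of the present paper but is not needed in the direct computation).

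One small point to tighten: in your Leibniz step you say you use the cocycle identity for the triples $(x,y,z)$, $(y,z,x)$ and $(y,x,z)$, but in practice one needs a slightly different combination (e.g.\ subtract the instance for $(z,y,x)$ from that for $(x,y,z)$, and the instance for $(y,z,x)$ from that for $(y,x,z)$, then add) to make all the $\mu_1(\mu_0(\cdot,\cdot),\alpha_0(\cdot))$ and $\mu_1(\alpha_0(\cdot),\mu_0(\cdot,\cdot))$ terms cancel simultaneously. The idea is right; just be careful when you write it out.
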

\subsection{Deformation of Hom-Lie algebras}

\begin{defn}
Let  $(\LX,[.,.],\alpha)$  be a Hom-Lie algebra. A one-parameter formal Hom-Lie deformation of $\LX$ is given by the $\K[[t]]$-bilinear map \\$[.,.]_t : \LX[[t]] \times \LX[[t]]\rightarrow \LX[[t]]$ of the form
$$[.,.]_t=\sum_{i\geq0}t^i [.,.]_i$$ where each $[.,.]_i$ is a bilinear map $ [.,.]_i : \LX\times \LX\rightarrow \LX $  (extended to be $\K[[t]]$-bilinear),  $[.,.]=[.,.]_0$  and satisfying the following conditions $$[x,y]_t=-[y,x]_t \;\quad \hbox{skew-symmetry},$$  \begin{equation}\label{Jaco}\circlearrowleft_{x,y,z}\big[\alpha(x),[y,z]_t\big]_t=0  \qquad \hbox{Hom-Jacobi identity}\end{equation}
The deformation is said to be of order $k$ if $[.,.]_t=\sum_{i\geq0}^k t^i [.,.]_i.$
\end{defn}
\begin{rem}
the skew-symmetry of $[.,.]_t$ is equivalent to the skew-symmetry of all $[.,.]_i$ for $i\geq0$.
\end{rem}
The identity (\ref{Jaco}) is called deformation equation of the Hom-Lie algebra and it is equivalent to  $$\circlearrowleft_{x,y,z}\sum_{i\geq0,j\geq0}t^{i+j}\big[\alpha(x),[y,z]_i\big]_j=0$$
i.e.
$$\circlearrowleft_{x,y,z}\sum_{i\geq0,s\geq0}t^{s}\big[\alpha(x),[y,z]_i\big]_{s-i}=0$$
or
$$\sum_{s\geq0}t^s\circlearrowleft_{x,y,z}\sum_{i\geq0}\big[\alpha(x),[y,z]_i\big]_{s-i}=0$$
which is equivalent to the following infinite system \begin{equation}\label{cycl}\circlearrowleft_{x,y,z}\sum_{i\geq0}\big[\alpha(x),[y,z]_i\big]_{s-i}=0, \;\hbox{for}\; s=0,1,2,...\end{equation}
In particular, for $s=0$ we have $ \circlearrowleft_{x,y,z}\big[\alpha(x),[y,z]_0\big]_0$ which  is the Hom-Jacobi identity of $\LX$.
\\The equation, for s=1, leads to $\delta_{HL}^2[.,.]_1=0,$ i.e.  $D[.,.]_1=[[.,.],[.,.]_1]_\alpha^\wedge=0$. Then  $[.,.]_1$ is a 2-cocycle.

For $s\geq2$, the identity (\ref{cycl}) is equivalent to : \begin{align*}
\delta_{HL}^2[.,.]_s(x,y,z)&=-\sum_{p+q=s}\circlearrowleft_{x,y,z}\big[\alpha(x),[y,z]_q\big]_p\\
\ & =\frac{1}{2}\sum_{p+q=s,p>0,q>0} \big[[.,.]_p,[.,.]_q\big]_\alpha^\wedge(x,y,z)
\end{align*}
See Section \ref{secCTilde} for the definition of $[.,.]_\alpha^\wedge$.
\begin{defn}
Let $(\LX,[.,.],\alpha)$ be a Hom-Lie algebra satisfying $[\alpha(x),\alpha(y)]=\alpha([x,y])$. Given
two deformations $\LX_t=(\LX,[.,.]_t,\alpha)$ and  $\LX'_t=(\LX,[.,.]_t^{'},\alpha)$ of $\A$ where $[.,.]_t=\sum_{i\geq0}t^i [.,.]_i$ and $[.,.]'_t=\sum_{i\geq0}t^i [.,.]'_i$  with $[.,.]_0=[.,.]'_0=[.,.]$. We say  that  $\LX_t$ and $\LX'_t$  are equivalents if there exists a formal  automorphism $(\phi_t)_{t\geq0} : \LX[[t]]\rightarrow \LX[[t]]$,  that may be written in the form $\phi_t=\sum_{i\geq0}\phi_i t^i$ where $\phi_i\in End(\LX)$ and $\phi_0= id$, such that $$\phi_t([x,y]_t)=[\phi_t(x),\phi_t(y)]_t^{'}.$$
A deformation $L_t$ is said to be trivial if and only if $\LX_t$ is equivalent to $\LX$ (viewed as an algebra on $\LX[[t]]$.)
\end{defn}
Similarly to Hom-associative algebras, we have that  two  2-cocycles corresponding  to two equivalent deformations
are cohomologous.
\begin{defn}
Let $(\LX,[.,.],\alpha)$ be a Hom-Lie algebra, and $[.,.]_1$ be an element of $Z_{HL}^2(\LX,\LX)$,
 the 2-cocycle $[.,.]_1$ is said to be integrable if there exists a family  $([.,.]_t)_{t\geq0}$ such that $[.,.]_t=\sum_{i\geq0}t^i [.,.]_i$ defines a formal deformation $\LX_t=(\LX,[.,.]_t,\alpha)$ of $\A$.
\end{defn}
One may also prove
\begin{thm}
Let $(\LX,[.,.],\alpha)$ be a Hom-Lie algebra and $\LX_t=(\LX,[.,.]_t,\alpha)$ be a one-parameter formal deformation of $\LX$, where $[.,.]_t=\sum_{i\geq0}t^i [.,.]_i$. Then there exists an equivalent deformation $[.,.]'_t=\sum_{i\geq0}t^i [.,.]'_i$, where $\mu'_t=\sum_{i\geq0}t^i\mu'_i$ such that $[.,.]'_1\in Z_{HL}^2(\LX,\LX)$ and $[.,.]'_1$ does not belong to $ B_{HL}^2(\LX,\LX).$

Hence, If  $H_{HL}^2(\LX,\LX)= 0$ then every formal deformation  is equivalent to a trivial deformation.
\end{thm}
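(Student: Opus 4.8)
The plan is to carry out, for Hom-Lie algebras, the same normalisation procedure as in the Hom-associative case treated above: starting from a given formal deformation $\LX_t=(\LX,[.,.]_t,\alpha)$, I would transport it by carefully chosen formal automorphisms so as to annihilate, degree by degree, the lowest-order term lying above $[.,.]$. Three facts recorded earlier in this section are the only inputs. First, the linear term $[.,.]_1$ of any formal deformation is always a $2$-cocycle, $\delta_{HL}^2[.,.]_1=0$. Second, for $s\geq 2$ the deformation equation \eqref{cycl} is equivalent to $\delta_{HL}^2[.,.]_s=\frac{1}{2}\sum_{p+q=s,\;p>0,\;q>0}\big[[.,.]_p,[.,.]_q\big]_\alpha^\wedge$. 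Third, equivalent deformations have cohomologous linear terms; concretely, transporting $[.,.]_t$ through a formal automorphism $\phi_t=\id+t\,\psi+\cdots$, where $\psi$ is a linear endomorphism of $\LX$ with $\psi\circ\alpha=\alpha\circ\psi$ (a $1$-cochain), changes $[.,.]_1$ by the coboundary $\delta_{HL}^1(\pm\psi)$, the sign depending on conventions --- this being the Hom-Lie counterpart of identity \eqref{integrability1}, got by expanding the relation $\phi_t([x,y]_t)=[\phi_t(x),\phi_t(y)]'_t$ to first order in $t$.

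First I would normalise the linear term. If $[.,.]_1\notin B_{HL}^2(\LX,\LX)$, take $[.,.]'_t=[.,.]_t$ and the conclusion holds. Otherwise $[.,.]_1=\delta_{HL}^1\psi_1$ for some $1$-cochain $\psi_1$, and I would transport $[.,.]_t$ by $\phi_t=\id\mp t\,\psi_1$, choosing the sign so that the resulting shift of the linear term equals $-\delta_{HL}^1\psi_1$; since $\psi_1$, hence $\phi_t$, commutes with $\alpha$, this is an admissible equivalence. The transported deformation $[.,.]'_t$ then has $[.,.]'_0=[.,.]$ and $[.,.]'_1=[.,.]_1-\delta_{HL}^1\psi_1=0$, so $[.,.]'_t=[.,.]+t^2[.,.]'_2+\cdots$. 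Thus there is always an equivalent deformation whose linear term is either a nonzero cohomology representative (when $[.,.]_1\notin B_{HL}^2$) or zero (when $[.,.]_1\in B_{HL}^2$), which is the content of the first assertion.

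Next I would iterate to get the ``Hence'' clause. Assume inductively that, after finitely many such transports, we have reached an equivalent deformation with $[.,.]_1=\cdots=[.,.]_{k-1}=0$. Setting $s=k$ in the identity of the first paragraph, each summand $\big[[.,.]_p,[.,.]_q\big]_\alpha^\wedge$ involves a factor with index in $\{1,\dots,k-1\}$ and hence vanishes, so $\delta_{HL}^2[.,.]_k=0$, i.e.\ $[.,.]_k\in Z_{HL}^2(\LX,\LX)$. When $H_{HL}^2(\LX,\LX)=0$ this cocycle is a coboundary, $[.,.]_k=\delta_{HL}^1\psi_k$, and transporting by $\phi_t=\id\mp t^k\psi_k$ --- a map congruent to $\id$ modulo $t^k$ and commuting with $\alpha$ --- yields an equivalent deformation in which the terms of orders $1,\dots,k$ all vanish. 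Composing the automorphisms produced for $k=1,2,\dots$ gives a formal automorphism $\Phi_t$ carrying the original $[.,.]_t$ to $[.,.]$; this infinite composite is well defined because the $k$-th factor is $\equiv\id\pmod{t^k}$, so on each Taylor coefficient only finitely many factors are nontrivial. Hence every formal deformation of $\LX$ is equivalent to the trivial one whenever $H_{HL}^2(\LX,\LX)=0$.

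The step I expect to be the main obstacle is the bookkeeping behind the third input above: verifying that transport by $\id+t\,\psi$ shifts the linear term by precisely $\pm\delta_{HL}^1\psi$ amounts to unwinding the $n=1$ instance of \eqref{HLcohomo} against the first-order expansion of the equivalence relation, using skew-symmetry of $[.,.]$ and $\psi\circ\alpha=\alpha\circ\psi$; and, for the rigidity statement, one must check that the infinite composite $\Phi_t$ genuinely lies in the group of formal automorphisms. Both verifications are routine, and with them in hand the theorem is an immediate consequence of the three recalled facts.
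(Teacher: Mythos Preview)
Your proposal is correct and is exactly the standard Gerstenhaber normalisation argument. In the paper itself this theorem is stated without proof (it is introduced by ``One may also prove'' and is meant to be read as the Hom-Lie analogue of the preceding Hom-associative result, which in turn is stated after the relevant identities are derived but without a detailed argument). Your write-up spells out precisely the steps the paper leaves implicit: the linear term is always a $2$-cocycle, transporting by $\phi_t=\id\mp t^k\psi_k$ shifts the order-$k$ term by $\delta_{HL}^1\psi_k$ without disturbing lower orders, and when $H_{HL}^2=0$ one iterates and composes to reach the trivial deformation. This matches the paper's intended approach; the only point worth flagging is that, as you implicitly note, the first assertion of the theorem is slightly imprecisely stated in the paper (it tacitly excludes the case where the deformation is already equivalent to the trivial one, in which case every equivalent linear term lies in $B_{HL}^2$), and your dichotomy ``either $[.,.]'_1\notin B_{HL}^2$ or $[.,.]'_1=0$'' is the correct formulation.
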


The Hom-Lie algebras whose all formal  deformations  are trivial we said to be rigid. The previous theorem gives a criterion for rigidity.

The obstruction study leads in the case of Hom-Lie algebra to the following theorem.

\begin{thm}
Let $(\LX,[.,.],\alpha)$ be a Hom-Lie algebra and $\LX_t=(\LX,[.,.]_t,\alpha)$ be a $k-1$-order one-parameter formal deformation of $\LX$, where $[.,.]_t=\sum_{i\geq0}^{k-1}t^i [.,.]_i$.

Then  $$\psi([.,.]_1,...,[.,.]_{k-1})=\frac{1}{2}\sum_{p+q=k-1,p>0,q>0} [[.,.]_p,[.,.]_q]_\alpha^\wedge \in Z_{HL}^3(\LX,\LX)$$ i.e $\psi\in \tilde{Z}_D^2(\LX,\LX)$.

  Therefore the deformation  extends to a deformation of order $k$ if and only if   $\psi([.,.]_1,...,[.,.]_{k-1})$ is a coboundary.
\end{thm}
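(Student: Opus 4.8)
The plan is to carry out the entire argument inside the graded Lie algebra $(\tilde{C}_\alpha(\LX,\LX),[.,.]_\alpha^\wedge)$, in exact parallel with the Hom-associative obstruction theorem proved above, the Nijenhuis--Richardson bracket now playing the role of the Gerstenhaber bracket. Write $\theta_i:=[.,.]_i$, so $\theta_0=[.,.]$ and each $\theta_i$ lies in $\tilde{C}_\alpha^1(\LX,\LX)$, i.e. has degree $1$, and put $D:=D_{[.,.]}^\alpha=[\theta_0,\cdot]_\alpha^\wedge=\delta_{HL}$ by Proposition \ref{P3}. The only structural inputs I will use are: (i) $[\theta_0,\theta_0]_\alpha^\wedge=0$ because $\LX$ is a Hom-Lie algebra, whence $D^2=0$ by Proposition \ref{P4} (equivalently by Remark \ref{rem}); and (ii) by the graded Jacobi identity \eqref{gradedJacobi} for $[.,.]_\alpha^\wedge$, the operator $D=[\theta_0,\cdot]_\alpha^\wedge$ is a graded derivation of the bracket $[.,.]_\alpha^\wedge$.

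First I would unwind the deformation system \eqref{cycl}. Setting $R_s:=\tfrac12\sum_{p+q=s,\ p,q>0}[\theta_p,\theta_q]_\alpha^\wedge$, the order-$s$ equation of \eqref{cycl} is, for $1\le s\le k-1$, equivalent to $D\theta_s=R_s$ (for $s=1$ the sum is empty, so $\theta_1$ is a $2$-cocycle), and $\psi([.,.]_1,\dots,[.,.]_{k-1})=R_{k-1}$. The heart of the proof is to show $D R_{k-1}=0$. Applying $D$ term by term, using that $D$ is a graded derivation of $[.,.]_\alpha^\wedge$ and the sign rules \eqref{gradedSkewSym} (so that $[\theta_p,\theta_q]_\alpha^\wedge$ is graded-symmetric and $[D\theta_p,\theta_q]_\alpha^\wedge=-[\theta_q,D\theta_p]_\alpha^\wedge$), the two halves of the Leibniz expansion coincide and one gets
\[
D R_{k-1}=\sum_{p+q=k-1,\ p,q>0}[D\theta_p,\theta_q]_\alpha^\wedge .
\]
Every index appearing here satisfies $1\le p\le k-2<k-1$, so the order-$p$ equation applies and lets me replace $D\theta_p$ by $R_p$, giving
\[
D R_{k-1}=\tfrac12\sum_{i+j+q=k-1,\ i,j,q>0}\big[[\theta_i,\theta_j]_\alpha^\wedge,\theta_q\big]_\alpha^\wedge .
\]
Now the index set $\{(i,j,q):i+j+q=k-1,\ i,j,q>0\}$ is stable under all permutations of $(i,j,q)$; rewriting $[[\theta_i,\theta_j]_\alpha^\wedge,\theta_q]_\alpha^\wedge=-[\theta_q,[\theta_i,\theta_j]_\alpha^\wedge]_\alpha^\wedge$ and summing the graded Jacobi identity \eqref{gradedJacobi} over this symmetric set, the expression equals three times a copy of itself, hence is $0$. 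Therefore $\delta_{HL}^3\psi=D\psi=0$, i.e. $\psi\in Z_{HL}^3(\LX,\LX)=\tilde{Z}_D^2(\LX,\LX)$, which is the first assertion.

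For the extension statement I would look at the order-$k$ equation of \eqref{cycl}. Splitting off the two summands that contain $\theta_0$, and using that all equations of order $\le k-1$ already hold, this equation reduces to $\delta_{HL}^2\theta_k=R_{k-1}=\psi([.,.]_1,\dots,[.,.]_{k-1})$, with unknown $\theta_k\in\tilde{C}_\alpha^1(\LX,\LX)$. Consequently the $(k-1)$-order deformation extends to order $k$ if and only if this equation is solvable, i.e. if and only if $\psi$ lies in the image of $\delta_{HL}^2$, that is, $\psi\in B_{HL}^3(\LX,\LX)$ is a coboundary. Together with the previous step this proves the theorem; in particular, when $H_{HL}^3(\LX,\LX)=0$ every such obstruction vanishes.

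The main obstacle is the identity $D R_{k-1}=0$, and more precisely the bookkeeping of the Koszul signs of $[.,.]_\alpha^\wedge$: the graded symmetry of $[\theta_p,\theta_q]_\alpha^\wedge$, the derivation sign of $D$, and the sign relating $[[\cdot,\cdot],\cdot]_\alpha^\wedge$ to $[\cdot,[\cdot,\cdot]]_\alpha^\wedge$, all of which must be matched against \eqref{gradedJacobi}. Conceptually this case is lighter than the Hom-associative one: there the analogous computation forced the extra cup-product terms $\varphi\smile\psi$ measuring the failure of $\circ_\alpha$ to be a graded Lie bracket, whereas here $[.,.]_\alpha^\wedge$ is genuinely a graded Lie bracket, so the graded Jacobi identity closes the argument directly.
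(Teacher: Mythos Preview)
Your argument is correct in method and takes a genuinely different route from the paper. The paper proves $\delta_{HL}^3\psi=0$ by a brute-force four-variable expansion: it writes $\delta_{HL}^3(\psi)(x,y,z,t)=A_1+B_1+C_1$, feeds in the lower-order deformation identities $\delta_{HL}^2[.,.]_m=-\sum_{r+s=m}\circlearrowleft[\alpha(\cdot),[\cdot,\cdot]_r]_s$ to expand $A_1$ and $B_1$ into triple sums, and then verifies the cancellations $A_{11}+B_1=0$, $A_{12}=0$, $C_1=0$ by hand. You instead stay entirely inside the graded Lie algebra $(\tilde{C}_\alpha(\LX,\LX),[.,.]_\alpha^\wedge)$: the graded Jacobi identity simultaneously gives that $D=[\theta_0,\cdot]_\alpha^\wedge$ is a graded derivation and that the symmetrized triple-bracket sum vanishes, so the cocycle computation collapses to two lines. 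This is exactly the payoff of Section~\ref{secCTilde}: the paper's explicit calculation never really exploits the Nijenhuis--Richardson structure it built, whereas your argument does, and is correspondingly shorter and coordinate-free. Your closing comparison is also on point: in the Hom-associative theorem the paper had to introduce the auxiliary cup product $\smile$ because $\circ_\alpha$ is only pre-Lie, while here $[.,.]_\alpha^\wedge$ is an honest graded Lie bracket and no correction terms appear.

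One indexing slip to flag (which the paper itself wobbles on, writing $k-1$ in the statement but $k$ inside the proof): the order-$k$ equation of \eqref{cycl} is $\delta_{HL}^2\theta_k=R_k$, not $R_{k-1}$. With your convention that the order-$(k-1)$ deformation already satisfies $D\theta_s=R_s$ for all $s\le k-1$, the element $R_{k-1}=D\theta_{k-1}$ is \emph{automatically} a coboundary, so it cannot be the obstruction. The genuine obstruction to extending is $R_k=\tfrac12\sum_{p+q=k,\,p,q>0}[\theta_p,\theta_q]_\alpha^\wedge$, and your cocycle argument applies to it verbatim (you then use $D\theta_p=R_p$ for $p\le k-1$, all of which are available).
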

\begin{proof}
with a direct computation we have
$$\delta_{HL}^3\big(\psi([.,.]_1,...,[.,.]_{k})\big)(x,y,z,t)=A_1+B_1+C_1$$
where
\begin{align*}
A_1&=\sum_{p+q=k, p>0, q>0}\big(\delta_{HL}^2[.,.]_q(\alpha(x),\alpha(t),[y,z]_p)+\delta_{HL}^2[.,.]_q(\alpha(y),\alpha(z),[x,t]_p)
\\ \ & +\delta_{HL}^2[.,.]_q(\alpha(x),\alpha(y),[z,t]_p)+\delta_{HL}^2[.,.]_q(\alpha(x),\alpha(z),[t,y]_p)\\ \ & +\delta_{HL}^2[.,.]_q(\alpha(y),\alpha(t),[z,x]_p) +\delta_{HL}^2[.,.]_q(\alpha(z),\alpha(t),[x,y]_p)\big),
\\
B_1&=\sum_{p+q=k, p>0, q>0}\big(\big[\alpha^2(x),\delta_{HL}^2[.,.]_p(z,y,t)\big]_q
+\big[\alpha^2(y),\delta_{HL}^2[.,.]_p(x,z,t)\big]_q\\ \ & +\big[\alpha^2(z),\delta_{HL}^2[.,.]_p(y,x,t)\big]_q+\big[\alpha^2(t),\delta_{HL}^2[.,.]_p(x,y,z)\big]_q\big),
\\ C_1&=\sum_{p+q=k, p>0, q>0}\big(-\big[[\alpha(z),\alpha(t)]_p,[\alpha(x),\alpha(y)]_q\big]_0-\big[[\alpha(t),\alpha(y)]_p,
[\alpha(x),\alpha(z)]_q\big]_0
\\ \ &-\big[[\alpha(y),\alpha(z)]_p,[\alpha(x),\alpha(t)]_q\big]_0
-\big[[\alpha(x),\alpha(t)]_p,[\alpha(y),\alpha(z)]_q\big])\\ \ & -\big[[\alpha(z),\alpha(x)]_p,[\alpha(y),\alpha(t)]_q\big]_0
  -\big[[\alpha(x),\alpha(y)]_p,[\alpha(z),\alpha(t)]_q\big]_0\big)
  \\ \ &=0.
  \end{align*}
 since $$\delta_{HL}^2[.,.]_m=-\sum_{r+s=m}\circlearrowleft_{x,y,z}\big[\alpha(x),[y,z]_r\big]_s.$$

 Then
$$A_1=A_{11}+A_{12},$$ where \begin{align*}
A_{11}&=\sum_{p+s+l=k}\big(\circlearrowleft_{z,y,t}[\alpha^2(x),[\alpha(z),[t,y]_p]_s]_l+
\circlearrowleft_{z,y,t}[\alpha^2(y),[\alpha(x),[t,z]_p]_s]_l\\ \ & +\circlearrowleft_{z,y,t}[\alpha^2(z),[\alpha(t),[x,y]_p]_s]_l+\circlearrowleft_{z,y,t}[\alpha^2(t),
[\alpha(x),[z,y]_p]_s]_l\big),\\
A_{12}&=\sum_{p+s+l=k}\big(\big[[\alpha(y),\alpha(z)]_p,[\alpha(x),\alpha(t)]_s\big]_l+\big[[\alpha(x),
\alpha(t)]_p,[\alpha(y),\alpha(z)]_s\big]_l\\ \ &+\big[[\alpha(z),\alpha(t)]_p,[\alpha(x),\alpha(y)]_s\big]_l
+\big[[\alpha(t),\alpha(y)]_p,[\alpha(x),\alpha(z)]_s\big]_l\\ \ &+\big[[\alpha(z),\alpha(x)]_p,[\alpha(y),\alpha(t)]_s\big]_l
+\big[[\alpha(x),\alpha(y)]_p,[\alpha(z),\alpha(t)]_s\big]_l\big),\\
B_1&=\sum_{q+s+l=k}\big(\circlearrowleft_{z,y,t}[\alpha^2(x),[\alpha(z),[y,t]_l]_s]_q+
\circlearrowleft_{z,y,t}[\alpha^2(y),[\alpha(x),[z,t]_l]_s]_q\\ \ &+\circlearrowleft_{z,y,t}[\alpha^2(z),[\alpha(t),[y,x]_l]_s]_q+\circlearrowleft_{z,y,t}[\alpha^2(t),
[\alpha(x),[y,z]_l]_s]_q\big).
\end{align*}
We have
$$A_{11}+B_1=0\;\; \hbox{and} \;\;\;A_{12}=0 .$$
Therefore $$\delta_{HL}^3\big(\psi([.,.]_1,...,[.,.]_{k})\big)(x,y,z,t)=0.$$
In the deformation equation corresponding to $[.,.]_t=\sum_{i\geq0}^k t^i [.,.]_i$ one has moreover the equation
$$\delta_{HL}^2[.,.]_{k}=psi([.,.]_1,...,[.,.]_{k-1}).
$$
Hence, the $(k-1)$-order formal deformation extends to a $k$-order formal deformation
whenever $\psi$ is a coboundary.
\end{proof}
\begin{cor}
If $H_{HL}^3(\LX,\LX)=\tilde{H}_D^2(\LX,\LX)=0$, then any infinitesimal deformation can be extended to a formal deformation.
\end{cor}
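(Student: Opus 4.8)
The plan is a standard order-by-order construction of the formal deformation, driven by the preceding obstruction theorem. Recall that an infinitesimal deformation of $\LX$ is a $2$-cocycle $[.,.]_1\in Z_{HL}^2(\LX,\LX)$; equivalently, $\LX_t=(\LX,[.,.]_0+t[.,.]_1,\alpha)$ is a one-parameter formal deformation of order $1$, because the $t^0$-coefficient of the deformation equation \eqref{cycl} is the Hom-Jacobi identity of $\LX$ and the $t^1$-coefficient is exactly $\delta_{HL}^2[.,.]_1=0$. This provides the base of an induction on the order.

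For the inductive step, assume we have already extended the given infinitesimal deformation to an order $k-1$ deformation $[.,.]_t=\sum_{i=0}^{k-1}t^i[.,.]_i$ of $\LX$. By the previous theorem the obstruction
$$\psi([.,.]_1,\dots,[.,.]_{k-1})=\frac{1}{2}\sum_{p+q=k,\,p>0,\,q>0}\big[[.,.]_p,[.,.]_q\big]_\alpha^\wedge$$
is a $3$-cocycle, $\psi\in Z_{HL}^3(\LX,\LX)=\tilde{Z}_D^2(\LX,\LX)$, and the order $k-1$ deformation extends to an order $k$ deformation precisely when $\psi$ is a $3$-coboundary, the new term $[.,.]_k$ then being any solution of $\delta_{HL}^2[.,.]_k=\psi([.,.]_1,\dots,[.,.]_{k-1})$.

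Now invoke the hypothesis $H_{HL}^3(\LX,\LX)=\tilde{H}_D^2(\LX,\LX)=0$, which means $Z_{HL}^3(\LX,\LX)=B_{HL}^3(\LX,\LX)$. Hence $\psi([.,.]_1,\dots,[.,.]_{k-1})$ is automatically a coboundary $\delta_{HL}^2[.,.]_k$ with $[.,.]_k\in C_{HL}^2(\LX,\LX)$; in particular $[.,.]_k$ is skew-symmetric, as it must be to serve as a deformation term, since $C_{HL}^2(\LX,\LX)$ by definition consists of alternating $2$-linear maps commuting with $\alpha$. Thus the order $k-1$ deformation extends to order $k$. Iterating over all $k$ yields compatible truncations $\sum_{i=0}^k t^i[.,.]_i$, hence a formal series $[.,.]_t=\sum_{i\geq0}t^i[.,.]_i$ satisfying \eqref{cycl} for every $s$, i.e. a formal deformation of $\LX$ extending the prescribed infinitesimal deformation.

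Since the genuinely non-trivial point---that the obstruction is always a $3$-cocycle---has been settled in the preceding theorem, there is no real obstacle here; the only item worth a line is that the $2$-cochain witnessing the obstruction as a coboundary can be taken skew-symmetric, which is already built into the definition of $B_{HL}^3(\LX,\LX)$.
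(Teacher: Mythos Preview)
Your argument is correct and is exactly the standard order-by-order induction the paper has in mind; the paper states the corollary without proof, as an immediate consequence of the preceding obstruction theorem, and your write-up spells out precisely that inference. The only remark is a cosmetic one: the summation in the obstruction should read $p+q=k$ (not $k-1$) to match the deformation equation $\delta_{HL}^2[.,.]_k=\psi$, as you in fact wrote---the paper's own statement contains the same index slip.
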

As in the Hom-associative case the space $H_{HL}^2(\LX,\LX)$ classify
the infinitesimal deformation and the space $H_{HL}^3(\LX,\LX)$ contains the
obstructions. Also we recover the results of the classical cases.

\end{document}